\documentclass[12pt, reqno, psamsfonts]{article}
\usepackage{amsmath}
\usepackage{amsthm}
\usepackage{amssymb}
\usepackage{amscd}
\usepackage{amsfonts}
\usepackage{amsbsy}
\usepackage{epsfig}
\theoremstyle{plain}
\newtheorem{thm}{Theorem}
\newtheorem{cor}{Corollary}
\newtheorem{lemma}{Lemma}

\newtheorem{example}{Example}
\theoremstyle{definition}

\newcommand{\R}{\mathbb{R}}

\renewcommand{\O}{\mathcal{O}}

\newcommand{\C}{\mathbb{C}}

\def\T{\mathcal T}

\newcommand{\comSSSS}[1]{\marginpar{\tiny QQQ}}

\begin{document}
\title {Rational maps with real multipliers}
\author{Alexandre Eremenko\thanks{Supported by NSF grant DMS-0555279.}
$\;$ and
Sebastian van Strien\thanks{Supported by a Royal Society Leverhulme
Trust Senior Research Fellowship.}} 
\date{\today}
\maketitle

A simple argument of Fatou \cite[Section 46]{Fatou3} shows
that if the Julia set of a rational
function is a smooth curve then all periodic orbits
on the Julia set have real multipliers, see also
\cite[Cor. 8.1]{Milnor}. This argument gives
the same conclusion 
if one only assumes that the Julia set
is {\em contained}
in a smooth curve. By a smooth curve we mean
a curve that has a tangent at every point.

We prove the converse statement:

\begin{thm}\label{thm:main}
Let $f\colon \bar \C\to \bar \C$ be a rational map 
such that the multiplier of each repelling periodic orbit
is real.
Then either the Julia set $J(f)$ is contained in a circle
or $f$ is a Latt\`es map.
\end{thm}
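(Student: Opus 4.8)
The plan is to read off from the hypothesis a very rigid local model for $J(f)$ at every repelling cycle, and then to force that model to be global, ending at the alternative ``$J(f)$ is contained in a circle, or $f$ is Latt\`es.'' Recall at the outset that repelling cycles are dense in $J(f)$, that $f$ restricted to $J(f)$ is topologically exact, and that by Sullivan every Fatou component is preperiodic; it therefore suffices to prove that if $f$ is not Latt\`es then some circle contains $J(f)$.

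Consider first the local picture at a repelling periodic point $p$, say $f^n(p)=p$ with multiplier $\lambda$. By hypothesis $\lambda\in\R$ and $|\lambda|>1$, and after passing from $f^n$ to $f^{2n}$ at $p$ if necessary we may assume $\lambda>1$. Let $\psi$ be the Koenigs linearizer, so $\psi\circ f^n=\lambda\,\psi$ near $p$ and $\psi(p)=0$. Then, near $0$, the set $\psi\bigl(J(f)\bigr)$ is closed and invariant under the \emph{real} dilation $z\mapsto\lambda z$ --- equivalently, in the coordinate $w=\log z$ it descends to a compact subset of a torus of the purely imaginary modulus $2\pi i/\log\lambda$ --- and the measure of maximal entropy $\mu_f$, written in the $\psi$-coordinate, is \emph{exactly} self-similar under $z\mapsto\lambda z$, since $\mu_f$ has Jacobian $d^{\,n}$ for the branch of $f^n$ that fixes $p$. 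This ``untiltedness'' of the local model is precisely the gain from $\lambda$ being real, as opposed to merely of modulus $>1$; it is what we must propagate over all of $J(f)$.

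Because repelling cycles are dense and $f|_{J(f)}$ is topologically exact, the local models at distinct cycles are linked by univalent transition maps --- branches of iterates of $f$ joining a neighbourhood of one cycle to a neighbourhood of another, together with the linear identifications that $\psi$ supplies along a single cycle --- and these transitions must intertwine the real self-similar structures at their two ends. Examining their infinitesimal behaviour along $J(f)$, one aims to show that the field of preferred real directions furnished by the linearizers either (i) extends to a measurable $f$-invariant line field on a positive-measure subset of $J(f)$, in which case $f$ is a flexible Latt\`es map by McMullen's theorem on invariant line fields, contrary to assumption; or (ii) the transition maps are forced to be (real) M\"{o}bius, so that the local pieces of $J(f)$ glue to an embedding of $J(f)$ into a single circle. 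As no circle contains $\bar\C$, the case $J(f)=\bar\C$ necessarily falls into alternative (i). This is the same mechanism as the classical rigidity for Kleinian groups --- all traces real forces the group to be Fuchsian, modulo Latt\`es-like exceptions --- and one would try to transport that reasoning into the dynamical setting.

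The genuine obstacle is this globalization: one must exclude the intermediate possibility that $J(f)$ exhibits real-scaling self-similarity at a dense set of points while being neither contained in a circle nor carrying an invariant line field --- this is what must fail for ``fat fractal'' Julia sets, Sierpi\'{n}ski-type Julia sets, and the like. Pinning down the transition maps sharply enough should require the quasiconformal deformation theory of rational maps (Ma\~{n}\'{e}--Sad--Sullivan, McMullen) and plausibly Zdunik-type rigidity for $\mu_f$, together with the statement that the only local models simultaneously compatible with the dynamics and with a real scaling are the two extremes, ``round'' (producing the circle) and ``maximally fractal with a line field'' (producing a Latt\`es map). The postcritically finite case, if it needs separate handling, and the residual case $J(f)=\bar\C$ should by contrast follow from already-available rigidity (Thurston) and thermodynamic-formalism results.
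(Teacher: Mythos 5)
Your proposal is a programme, not a proof, and the crucial step is left as a wish. You correctly observe that a real multiplier $\lambda$ makes the Koenigs model at a repelling point $p$ invariant under a real dilation, and that this is the local gain from the hypothesis. But you then write that one ``aims to show'' the dichotomy: either the preferred real directions extend to a measurable invariant line field of positive $\mu_f$-measure (forcing flexible Latt\`es by McMullen), or the transition maps are real M\"obius (forcing $J(f)$ into a circle). That dichotomy \emph{is} the theorem. You flag it yourself as ``the genuine obstacle,'' and nothing in the proposal closes it. Worse, the mechanism you invoke is doubtful at the outset: a real dilation $z\mapsto\lambda z$ with $\lambda>1$ preserves \emph{every} line through $0$, so the local model at a single repelling point does not single out any ``preferred real direction''; it is not clear there is any natural candidate line field to feed into McMullen's rigidity theorem, and the measure of maximal entropy does not charge periodic orbits, so a field defined only along cycles is not a priori a $\mu_f$-measurable object. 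The appeals to Zdunik rigidity, quasiconformal deformation theory, and thermodynamic formalism are gestures, not arguments.

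For contrast, the paper's proof proceeds along entirely different lines and supplies the missing mechanism concretely. It takes a \emph{global} Poincar\'e linearizer $\Psi\colon\C\to\bar\C$ at a repelling fixed point off the critical orbits, and studies the lines $L'$ through $0$ that hit $\Psi^{-1}(J(f))$. The key dichotomy comes from the deck transformation $\T$ with $\Psi\circ\T=\Psi$: using accumulating periodic points along $L'$ one shows $D\T$ is real on each such line (Lemma~4). If $\Psi^{-1}(J(f))$ meets infinitely many lines through $0$, then $D\T$ is real on infinitely many real-analytic curves, hence constant, so $\Psi$ is periodic; Ritt's theorem then pins $f$ to Latt\`es/Chebyshev/power maps (Lemma~2), and the last two have $J$ in a circle. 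If instead $\Psi^{-1}(J(f))\subset L$, the paper bounds the order of $\Psi$ via Valiron's formula $\rho=N\log\deg f/\log|\lambda|$ combined with Ledrappier's formula $\chi=\log\deg f/HD(\mu)$ (which, since $J\subset\Psi(L)$ forces $HD(\mu)\le 1$, yields $\chi\ge\log\deg f$ and hence a supply of periodic points giving $\rho<2$), then applies the Hadamard--Nevanlinna factorization with all zeros and poles real to write $\Psi(z)=e^{icz}g(z)$ with $g$ real on $\R$, and finally rules out $c\ne 0$ again via Ritt. None of this Nevanlinna/Valiron/Ledrappier machinery, nor the deck-transformation argument, appears in your sketch, and nothing in your sketch replaces it. As it stands, the proposal restates the problem in a suggestive vocabulary rather than solving it.
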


\begin{cor}
If the Julia set of a rational function is contained in a  
smooth curve  then it is contained in a circle. 
\end{cor}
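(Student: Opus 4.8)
The plan is to combine Theorem~\ref{thm:main} with the converse already recalled in the introduction; the Corollary will then be essentially immediate from the Theorem, and only two small points need attention. Suppose $J:=J(f)$ is contained in a curve $\gamma$ possessing a tangent line at each of its points. I would first verify, via Fatou's argument in its ``contained-in'' form, that every repelling periodic orbit of $f$ has real multiplier. Let $p$ have period $n$ and multiplier $\lambda$ with $|\lambda|>1$, and let $\kappa$ be a Koenigs coordinate at $p$: a conformal germ with $\kappa(p)=0$ and $\kappa\circ f^{n}=\lambda\kappa$. Then $E:=\kappa(J)$ is a closed germ at $0$ contained in $\kappa(\gamma)$, which still has a tangent line $L$ at $0$ because $\kappa$ is conformal; moreover $\lambda^{-1}E\subseteq E$ (total invariance of $J$ under $f^{n}$), and $E$ is not reduced to $\{0\}$ near $0$ (since $J$ is perfect). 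Pick a small $e\in E\setminus\{0\}$: the points $\lambda^{-k}e$ $(k\ge0)$ lie in $E$ and tend to $0$, so by the tangency of $\kappa(\gamma)$ at $0$ their directions $e^{-ik\arg\lambda}\cdot(e/|e|)$ must converge to the pair of unit directions of $L$ --- which is possible only if $\arg\lambda\in\pi\mathbb Z$, that is, $\lambda\in\mathbb R$. The only property of $\gamma$ used here is the bare existence of a tangent at each of its points.

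Now Theorem~\ref{thm:main} applies: either $J$ is contained in a circle, which is exactly the assertion, or $f$ is a Latt\`es map, in which case $J=\bar{\mathbb C}$. The second alternative is impossible, because $\bar{\mathbb C}$ is not contained in any curve having a tangent at every point --- such a curve is not space-filling (near each of its points it lies in an arbitrarily thin neighbourhood of its tangent line), hence is nowhere dense. Therefore $J$ is contained in a circle. The only non-formal ingredient is Theorem~\ref{thm:main}; granting it, there is no real obstacle, the two things to be careful about being the run of Fatou's argument under the weak regularity hypothesis on $\gamma$ (carried out above) and the elementary topological remark excluding the Latt\`es case.
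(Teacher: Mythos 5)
Your proposal is correct and follows precisely the route the paper intends: Fatou's repelling-fixed-point argument (carried out here through a Koenigs linearization) yields that every repelling multiplier is real, Theorem~\ref{thm:main} then leaves only the circle alternative or the Latt\`es alternative, and the Latt\`es case is ruled out because $J(f)=\bar{\C}$ cannot sit inside a curve admitting a tangent at each of its points. This is exactly the derivation the authors have in mind when they remark in the introduction that Fatou's argument applies to the ``contained-in'' hypothesis and then state the Corollary immediately after Theorem~1, so no comparison of approaches is needed; your write-up simply makes the two steps explicit.

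One small remark on rigor, not a gap but worth being aware of: both in the Koenigs step and in the nowhere-density step you are implicitly using the \emph{set-theoretic} reading of ``tangent at a point'' (the cluster set of directions $(z-q)/|z-q|$ for $z$ in the curve tending to $q$ lies on a line), rather than the purely parametric one (which only constrains nearby parameter values and in principle allows the curve to revisit a neighbourhood of $q$ from far-away parameters in uncontrolled directions). The set-theoretic reading is the natural one for a ``containment'' hypothesis, it is what Fatou and Milnor use, and under it both your conclusion $\arg\lambda\in\pi\Z$ and your nowhere-density claim are immediate; it is just worth stating which convention you adopt.
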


In fact, Theorem~\ref{thm:main} holds  if all repelling periodic points
on some relatively open subset of $J(f)$ have real multipliers.
It follows that even if a relatively open set of the Julia set is contained in a smooth curve,
then the Julia set is contained in a circle.

This corollary generalizes the result of Fatou \cite[Section 43]{Fatou3}
that whenever the Julia set of a rational function is a smooth curve,
this curve has to be a circle or an arc of a circle.
For another proof of the corollary, independent of Theorem~1,
see \cite{BE}. 
We give a more precise description of the maps which can occur:

\begin{thm}
Let  $f\colon \bar \C\to \bar \C$ be a rational map 
whose Julia set $J(f)$ is contained in a circle $C$.
Then there are the following possibilities:

\noindent
(i) $C$ is completely invariant, in which case $f^2$ is a Blaschke
product (that is both components of
the complement of $C$ are invariant under $f^2$). The Julia set
is either $C$ of a Cantor subset of $C$.

If (i) does not hold, then there is a critical point on $C$,
and a fixed point $x_0\in C$ whose multiplier
satisfies $\lambda\in[-1,1]$.
Let $I$ be the smallest closed
arc on $C$ which contains $J(f)$
and whose interior does not contain $x_0$.
Then one of the following holds:

\noindent
(ii) $I$ is a proper arc which is completely
invariant, and $J(f)=I$.

\noindent
(iii) $f(I)$ strictly contains $I$. 
The Julia set is a Cantor subset of $I$ in this case. 

\noindent 
In Cases (ii) and (iii), for each critical point $x$
of $f$ in $I$ 
there exists $N\ge 1$ such that
$f^N(x)\notin \mbox{interior}\,(I)$
(where in Case (ii) $N=1$). All critical points on $J(f)$
are pre-periodic.
\end{thm}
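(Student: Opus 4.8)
The plan is to normalize $C$, by a Möbius change of coordinate, to be the extended real line $\widehat{\mathbb R}=\mathbb R\cup\{\infty\}$, with $\sigma(z)=\bar z$ the reflection in $C$ and $\mathbb H^{+},\mathbb H^{-}$ the two complementary half--planes. The first step is to show that $f$ has real coefficients, i.e. $\sigma\circ f\circ\sigma=f$: the left side is again a rational map, and since $\sigma$ fixes $C$ pointwise while $f(J(f))=J(f)\subseteq C$, the two maps agree on the infinite set $J(f)$, hence coincide. Consequently $f(C)\subseteq C$, so $C$ is always forward invariant, and $G:=C\setminus J(f)$ is forward invariant because $f^{-1}(J(f))=J(f)$.

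Next I would organize the argument around whether $C$ is completely invariant. If it is, then $f$ permutes $\mathbb H^{+}$ and $\mathbb H^{-}$, so $f^{2}$ is a proper self--map of $\mathbb H^{+}$, that is (conjugating $\mathbb H^{+}$ to the disc) a finite Blaschke product; the classical description of Julia sets of Blaschke products then gives that $J(f)$ is either $C$ or a Cantor subset of $C$, which is case (i). So assume $C$ is not completely invariant. Since $f(C)\subseteq C$ always, this means $f^{-1}(C)\supsetneq C$, so some point off $C$ is mapped to $C$, and then $f(\mathbb H^{+})$ is open, connected, $\sigma$--invariant, meets both $\mathbb H^{\pm}$, and has boundary in $C$; hence $f(\mathbb H^{+})=\bar{\mathbb C}\setminus K$ for a compact proper $K\subsetneq C$. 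One checks that $K$ is connected, hence an arc; that $J(f)\subseteq K$ (since $\bar{\mathbb C}\setminus K=f(\mathbb H^{+})$ lies in the Fatou set); that $f^{-1}(K)\subseteq C$; and, via Riemann--Hurwitz for $\mathbb H^{+}\to\bar{\mathbb C}\setminus K$, that $f$ has at least two critical points on $C$.

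I would then locate $x_{0}$. The Fatou component $U^{*}$ containing the connected set $\bar{\mathbb C}\setminus K$ is $\sigma$--symmetric (it meets $C$) and, because $\mathbb H^{+}\subseteq\bar{\mathbb C}\setminus K$ forces $f(\bar{\mathbb C}\setminus K)\supseteq\bar{\mathbb C}\setminus K$, it is $f$--invariant. By Sullivan's no--wandering--domains theorem and the classification of invariant Fatou components, $U^{*}$ is an attracting, super--attracting, or parabolic basin; Siegel discs and Herman rings are excluded because $\sigma$ would reverse their rotation number. The distinguished periodic point attached to $U^{*}$ is a fixed point, and it is unique, hence fixed by $\sigma$, so it lies in $C$; call it $x_{0}$, with multiplier $\lambda\in(-1,1)$, $\lambda=0$, or $\lambda=\pm1$ respectively, in all cases $\lambda\in[-1,1]$. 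Since in the parabolic case one real side of $x_{0}$ lies in $U^{*}$, one has $x_{0}\notin\operatorname{int}_{C}J(f)$, so $I$ is well defined. Writing $I^{\circ}=C\setminus I$, the arc on the $x_{0}$--side of $J(f)$, the identity $f(x_{0})=x_{0}$ together with forward invariance of $G$ gives $f(I^{\circ})\subseteq I^{\circ}$, and from this together with $f^{-1}(K)\subseteq C$ one deduces $f^{-1}(I)\subseteq I$, hence $I\subseteq f(I)$.

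Finally I would split on $f(I)=I$ versus $f(I)\supsetneq I$, the only possibilities. If $f(I)=I$, then $I$ is completely invariant, so $U^{*}=\bar{\mathbb C}\setminus I$ is a completely invariant Fatou component and $J(f)=\partial(\bar{\mathbb C}\setminus I)=I$ (the arc has empty interior in $\bar{\mathbb C}$), which is case (ii); moreover, uniformizing $U^{*}$ by the disc, $f|_{U^{*}}$ becomes a Blaschke product, and reading $f|_{I}$ off the two--to--one boundary values of the uniformization (which fold at the endpoints of $I$) shows that $f$ sends every critical point in $\operatorname{int}(I)$ into $\partial I$, while $f(\partial I)\subseteq\partial I$; hence all critical points in $I$ satisfy the conclusion with $N=1$, and being sent into the periodic set $\partial I$ they are pre--periodic. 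If $f(I)\supsetneq I$, then $I$ is not forward invariant and $J(f)$ cannot be an arc (an arc Julia set would force $I=J(f)$ and $f(I)=I$), so $J(f)$ is a Cantor subset of $I$, which is case (iii). The main obstacle is the critical--point statement in case (iii): one must show that no critical point of $f$ in $\operatorname{int}(I)$ has its entire forward orbit in $\operatorname{int}(I)$, and that critical points on $J(f)$ are pre--periodic. For this I would prove that $\Lambda:=\bigcap_{n\ge0}f^{-n}(I)$ equals $J(f)$ and is a hyperbolic (expanding) Cantor repeller, invoking the theory of real one--dimensional dynamics (absence of wandering intervals for real--analytic maps, Mañé's hyperbolicity theorem) to rule out extra attracting or parabolic cycles sitting inside $I$; an expanding $\Lambda=J(f)$ then forbids non--pre--periodic critical points on $J(f)$ and forces the orbit of any critical point in $I$ to leave $\operatorname{int}(I)$, since otherwise it would accumulate on $\Lambda$ while passing through a point with vanishing derivative. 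Making this last analysis rigorous is where the real work lies.
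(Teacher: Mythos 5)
Your setup and the division into cases (i), (ii), (iii) are sound, and where you are correct you mostly parallel the paper, though by a longer route: the paper observes directly that if there is no critical point on $C$ then $f\vert_C$ is an unbranched covering of degree $\deg f$, so $C$ is completely invariant (case (i)); and in the non-invariant case it simply notes that the Fatou set is connected and therefore carries a single attracting or parabolic fixed point $x_0$, which lies on $C$ by the reflection symmetry. Your Riemann--Hurwitz count and the Sullivan/rotation-number discussion are correct but not needed.

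The genuine gap is in case (iii), on the critical-point statement, and you acknowledge it yourself. Your plan is to show that $\Lambda=\bigcap_n f^{-n}(I)=J(f)$ is a uniformly expanding Cantor repeller and deduce the conclusions from hyperbolicity. But this is circular: uniform expansion on $J(f)$ is impossible if a critical point lies on $J(f)$, and whether critical points on $J(f)$ are benign (pre-periodic) is exactly what is to be proved. \mane's theorem and the absence of wandering intervals let you analyze the dynamics away from the postcritical set, but they do not by themselves tell you where the critical orbits go, which is the content of the assertion. So the route you sketch cannot close without the missing ingredient.

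The paper's key idea, which your proposal lacks, is local and elementary. Let $x\in I$ be a critical point and $c=f(x)$ the critical value. Near $x$ the map is a branched cover of local degree $k\ge 2$, so for all small $\eps>0$ at least one of the equations $f(z)=c+\eps$, $f(z)=c-\eps$ has non-real solutions near $x$. Since $f^{-1}(J(f))=J(f)\subset\R$, those values $c\pm\eps$ cannot both lie in $J(f)$; hence $J(f)$ accumulates to $c$ from only one side, i.e. $c$ is an \emph{endpoint} of $J(f)$. Endpoints of $J(f)$ bound gaps of $J(f)$ in $[a,b]$, all of which are iterated preimages of the fundamental gap $\bar\R\setminus[a,b]$; and $\{a,b\}$ is $f$-invariant by the paper's Lemma~\ref{A}. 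It follows that $c$, and hence $x$, is pre-periodic, and that some iterate $f^N(x)$ lands in $\{a,b\}\cup(\bar\R\setminus[a,b])$, i.e. $f^N(x)\notin\mbox{interior}(I)$. This one-paragraph argument replaces the hyperbolicity machinery you were reaching for, and it is the step your proposal is missing.
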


\noindent
{\em Remarks.}  If $f$ is a Blaschke
product preserving a 
circle $C$, then $f:C\to C$ is a covering, so all three
cases (i), (ii) and (iii)
are disjoint. Chebyshev polynomials belong to the
case (ii), and every polynomial satisfying (ii) is
conjugate to $\pm T$, where $T$ is a Chebyshev polynomial.
If $f$ in Case (iii) is a polynomial
one can always take $N=1$,
but there are rational functions satisfying (iii) for which $N>1$,
see Example 3 at the end of the paper.

There are functions $f$ satisfying (ii) which are not
conjugate to polynomials.
A parametric description of functions satisfying (ii)
can be obtained using
\cite[Section 25]{Fatou1}\footnote{We use this opportunity
to notice that the statement of these results of Fatou in the
survey \cite{EL} is wrong. See Fatou's original paper
for the correct statements.}. 

Each $f$ satisfying (ii) is conjugate
to 
$B^2(\sqrt{z})$, where $B$ is an odd rational function
which leaves both upper and lower half-planes invariant,
and whose Julia set equals $\bar\R$.
In the opposite direction, if $B$ is an odd rational function
which leaves both upper and lower half-planes invariant,
and whose Julia set equals $\bar\R$ then
$B^2(\sqrt{z})$ is a rational function whose Julia set
is the ray $[0,\infty]$, and this function satisfies (ii).  


In Cases (ii) and  (iii) the interval $I$ is equal to $C\setminus B_0$ 
where $B_0$ denotes the immediate basin of $x_0$
for the restriction of $f$ on $C$.
In Case (iii)
there exist finitely many closed arcs on C such that
the full preimage of their union is contained in this union. To prove this claim,
take (within $C$) the preimages up to order $N$ of $B_0$,
where
$N$ is as in Theorem~2 and therefore the
union $K$ of the closures 
of these intervals contains all critical values in $I$.
Hence $I\setminus K$ has the following properties: 
$I\setminus K$ contains the Julia
set. As $K$ is forward invariant, $\bar\C$ is
 backward invariant.
As every point of $J(f)$ (and therefore every point
of $I\backslash K$ has all preimages in the closure
of $I\backslash K$, and there are no critical
values in the interior of $I\backslash K$,
we conclude that the closure of $I\backslash K$
is backward invariant.

Theorem 1 is proved in Section 1. In Section 2, we
prove Theorem~2 and discuss rational
functions satisfying (iii) of Theorem 2.

\section{Proof of Theorem 1}

There are only finitely many repelling cycles
which belong to the forward orbits of critical points.
So there exists
a repelling periodic point $p$ of $f$ of period $N$
which does not lie on the forward orbit of a critical point.
Replacing $f$ by $f^N$ we may assume that $N=1$.

Let $\Psi\colon \C\to \bar \C$ be a
holomorphic map which globally linearizes $f$ at $p$, \, 
i.e., 
$$\Psi\lambda=f\Psi,\quad \Psi(0)=p,\quad D\Psi(0)\neq 0. $$
Here $\lambda$ is the multiplier of $p$, so
$\lambda:=Df(p)$ is real. Such a map $\Psi$, which is also called
a Poincar\'e
function \cite{Valiron}, always exists. It is uniquely
defined by the value $D\Psi(0)
\in\C\backslash \{0\}$
which can be prescribed arbitrarily. 
%

\begin{lemma} \label{lem:univ}
If $z\in \Psi^{-1}(p)$ and $p$ is not an
iterate of a critical point then $D\Psi(z)\ne 0$.
\end{lemma}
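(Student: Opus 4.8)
I would argue by contradiction. Suppose some $z_0\in\Psi^{-1}(p)$ has $D\Psi(z_0)=0$; since $D\Psi(0)\neq 0$ we have $z_0\neq 0$. The goal is to manufacture a critical point of $f$ whose forward orbit contains $p$, contradicting the choice of $p$. The only tool I would use is the linearizing equation $\Psi(\lambda z)=f(\Psi(z))$, differentiated: $\lambda\,\Psi'(\lambda z)=f'(\Psi(z))\,\Psi'(z)$. Since $\Psi$ is $\bar\C$-valued and the critical values appearing below may be $\infty$, it is cleanest to recast this multiplicatively: writing $\deg_z h$ for the local degree of a holomorphic map $h$ at a point $z$ (so $\deg_z h\ge1$ in general, and $\deg_z h\ge2$ exactly when $z$ is a critical point of $h$), the chain rule becomes the chart-free identity $\deg_z(h_2\circ h_1)=\deg_{h_1(z)}(h_2)\cdot\deg_z(h_1)$.

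\noindent\textbf{Main step.}
Iterating the functional equation gives $f^m\circ\Psi=\Psi\circ(z\mapsto\lambda^m z)$ for every $m\ge1$. Applying the composition rule to both sides at the point $z_0/\lambda^m$, and noting that $z\mapsto\lambda^m z$ is a biholomorphism of $\bar\C$ (hence has local degree $1$ everywhere), I get
\[
\deg_{z_0}(\Psi)=\deg_{z_0/\lambda^m}(\Psi)\cdot\deg_{\Psi(z_0/\lambda^m)}(f^m).
\]
Now I would send $m\to\infty$. As $p$ is repelling, $|\lambda|>1$, so $z_0/\lambda^m\to0$; and since $D\Psi$ is holomorphic near $0$ with $D\Psi(0)\neq0$, it is nonvanishing on a neighbourhood of $0$, so $\deg_{z_0/\lambda^m}(\Psi)=1$ once $m$ is large. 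Fixing such an $m$, the displayed identity forces $\deg_{\Psi(z_0/\lambda^m)}(f^m)=\deg_{z_0}(\Psi)\ge2$; that is, $w:=\Psi(z_0/\lambda^m)$ is a critical point of $f^m$.

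\noindent\textbf{Conclusion and expected obstacle.}
A critical point $w$ of $f^m$ satisfies $f^{j}(w)\in\Crit(f)$ for some $0\le j\le m-1$; set $c:=f^{j}(w)$. Then, using the functional equation again,
\[
f^{\,m-j}(c)=f^{m}(w)=f^{m}\bigl(\Psi(z_0/\lambda^m)\bigr)=\Psi(z_0)=p ,
\]
with $m-j\ge1$, so $p$ lies on the forward orbit of the critical point $c$ — contradiction. I expect the only point requiring care, and the mild obstacle here, to be the degree bookkeeping when some intermediate value $\Psi(z_0/\lambda^j)$ or $c$ is $\infty$; the chart-free local-degree formalism above handles it, and no further idea is needed beyond differentiating the linearizer and running the orbit of $z_0$ backward toward $p$, where $D\Psi$ is nonzero by construction.
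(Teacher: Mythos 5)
Your proof is correct and follows essentially the same route as the paper's one-line argument: iterate the linearizing equation $\Psi = f^n\circ\Psi\circ\lambda^{-n}$ to pull $z_0$ back toward $0$ where $\Psi$ is univalent, so that a vanishing derivative at $z_0$ would force $\Psi(\lambda^{-n}z_0)$ to be a critical point of some $f^n$ whose image under $f^n$ is $p$, contradicting the hypothesis. Your chart-free local-degree bookkeeping for points at $\infty$ is just a careful way of spelling out the chain-rule step the paper leaves implicit.
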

\begin{proof}
Since $\Psi(z)=
f^{n}\Psi\lambda^{-n}z$, and $\Psi$
is univalent in a neighborhood of $0$,
the result follows.
\end{proof}

We will use several times the following result of 
Ritt \cite{Ritt}:

\begin{lemma}\label{ritt}
The Poincar\'e function is  periodic 
if and only if $f$ is a Latt\`es map, or conjugate to
$\pm T_n$, where $T_n$ is a Chebyshev polynomial, or conjugate
to $z^{\pm d}$.
\end{lemma}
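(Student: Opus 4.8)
Both implications must be checked; the easy direction is to exhibit an explicit periodic \poin function in each special case. If $f(z)=z^{d}$ (resp. $z^{-d}$), pick a repelling fixed point $\zeta$, so $\zeta^{d-1}=1$ (resp. $\zeta^{d+1}=1$) with multiplier $d$ (resp. $-d$); then $\Psi(z)=\zeta e^{cz}$ satisfies $\Psi(\lambda z)=f(\Psi(z))$ and is periodic. If $f=\pm T_{n}$, the fixed point $x_{*}=\cos(2\pi/(n+1))\in(-1,1)$ has multiplier $\mp n$, and using $T_{n}(\cos\theta)=\cos(n\theta)$ one checks that $\Psi(z)=\cos\bigl(2\pi/(n+1)+cz\bigr)$ is a periodic \poin function. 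If $f$ is a Latt\`es map, with semiconjugacy $\theta\circ L=f\circ\theta$ from an affine map $L(z)=az+b$ of a torus $T=\C/\Lambda$ via a finite branched cover $\theta$, choose a repelling periodic point $p=\theta(z_{0})$ with $z_{0}$ a periodic point of $L$ at which $\theta$ is a local isomorphism (such $p$ are dense in $J(f)=\bar\C$); then $\Psi(z)=\theta(z_{0}+cz)$ is a doubly periodic \poin function. Each verification is a one-line substitution.

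For the converse, let $\Lambda\subset\C$ be the group of periods of $\Psi$; it is discrete since $\Psi$ is a nonconstant meromorphic function, so we may assume $\Lambda\ne\{0\}$, of rank $1$ or $2$. First I would show $\lambda\Lambda\subseteq\Lambda$: writing the functional equation as $\Psi(\lambda z)=f(\Psi(z))$, the group of periods of the right-hand side contains $\Lambda$ (since $\Psi$ has periods $\Lambda$), while that of the left-hand side is exactly $\lambda^{-1}\Lambda$; equating forces $\lambda^{-1}\Lambda\supseteq\Lambda$, i.e. $\lambda\Lambda\subseteq\Lambda$. If $\operatorname{rank}\Lambda=1$, say $\Lambda=\Z\omega$, this gives $\lambda\in\Z$, hence $|\lambda|\ge2$; if $\operatorname{rank}\Lambda=2$, then $\lambda$ acts on $\Lambda\cong\Z^{2}$ by an integer matrix, so $\lambda$ is an algebraic integer and $|\lambda|^{2}=\det\in\Z_{\ge2}$.

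In the rank-$2$ case $\Psi$ is an elliptic function, hence descends to a nonconstant holomorphic (so finite branched covering) map $\tilde\Psi\colon T\to\bar\C$ with $T=\C/\Lambda$, while $z\mapsto\lambda z$ descends to a linear endomorphism $L$ of $T$ of degree $|\lambda|^{2}\ge2$; the functional equation becomes $\tilde\Psi\circ L=f\circ\tilde\Psi$ on $T$, which is exactly a semiconjugacy exhibiting $f$ as a Latt\`es map. In the rank-$1$ case, $\C/\Z\omega\cong\C^{*}$ via $z\mapsto w=e^{2\pi i z/\omega}$, so $\Psi$ descends to a nonconstant holomorphic $h\colon\C^{*}\to\bar\C$ with $h(w^{\lambda})=f(h(w))$, where $\lambda\in\Z$, $|\lambda|\ge2$. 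The key point is that $h$ extends to a rational map of $\bar\C$: the \poin function of a rational map of degree $d$ has finite order $\log d/\log|\lambda|$ (classical; see \cite{Valiron}), so $h$ can have no essential singularity at $0$ or $\infty$, since such a singularity would make $\Psi$ grow faster than any power of $e^{|z|}$. Thus $h$ is a finite-degree rational semiconjugacy from the power map $w\mapsto w^{\lambda}$ to $f$, and the classical analysis of such semiconjugacies (going back to \cite{Ritt}) shows that $h$ is M\"obius-conjugate either to $w\mapsto w^{m}$, in which case $f$ is conjugate to $z^{\pm m'}$, or to the Chebyshev quotient $w\mapsto w^{m}+w^{-m}$, in which case $f$ is conjugate to $\pm T_{m'}$; these are precisely the two ways $\bar\C$ occurs as a quotient of the degenerate torus $\C^{*}$, namely by a group of rotations or by such a group extended by the inversion $w\mapsto1/w$.

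\textbf{Main obstacle.} The rank-$2$ case is essentially a rewriting of the definition of a Latt\`es map. The real work is in the rank-$1$ case: combining the finite-order property of \poin functions to force $h$ to be rational, and then carrying out the ramification bookkeeping for $h$ as a semiconjugacy of $w\mapsto w^{\lambda}$ to pin it down to a power of $w$ or to $w^{m}+w^{-m}$, which is where the special geometry of the power map is used.
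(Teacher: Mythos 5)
The paper does not prove this lemma; it is stated as a quoted result and attributed directly to Ritt's 1922 paper \cite{Ritt}. Your proposal therefore cannot be ``compared against the paper's own proof'' in the usual sense --- you are supplying an argument the paper deliberately omits. The overall architecture you lay out (look at the period group $\Lambda$ of $\Psi$, observe $\lambda\Lambda\subseteq\Lambda$, and split on $\operatorname{rank}\Lambda\in\{1,2\}$, with rank $2$ giving Latt\`es and rank $1$ giving the power maps and Chebyshev maps) is indeed the right skeleton of Ritt's proof, and the rank-$2$ branch is essentially unwinding the definition of a Latt\`es map, as you say.

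There is, however, a real gap in the rank-$1$ branch. You assert that $h$ must extend to a rational function because an essential singularity of $h$ at $0$ or $\infty$ ``would make $\Psi$ grow faster than any power of $e^{|z|}$,'' i.e.\ give $\Psi$ infinite order. That implication is false. A meromorphic $h$ on $\C^*$ with a genuine essential singularity at $\infty$ can be chosen (for instance via a sufficiently lacunary power series) so that $\Psi=h\circ\exp$ has finite order --- order $2$, say --- rather than infinite order. Since the Valiron formula $\rho=\log d/\log|\lambda|$ puts no \emph{a priori} upper bound on $\rho$ (it can certainly exceed $2$), finite order of $\Psi$ by itself does not force $h$ to be rational. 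What would close the gap is to first establish $|\lambda|=d$ in the rank-$1$ situation (equivalently $\rho=1$), after which the counting-function estimate $n(r,a,\Psi)=O(r)$ does imply that $h$ has only finitely many $a$-points in $\C^*$ and hence is rational. But you derive $|\lambda|=d$ only \emph{after} assuming $h$ rational and comparing degrees in $h(w^\lambda)=f(h(w))$, so the argument as written is circular at this point. A second, lesser issue: your final step appeals to ``the classical analysis of such semiconjugacies (going back to \cite{Ritt})'' to pin $h$ down to $w\mapsto w^{m}$ or $w\mapsto w^{m}+w^{-m}$; this is precisely the content of Ritt's theorem in the rank-$1$ case and cannot simply be invoked in a proof of the same theorem. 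Finally, a minor slip in the easy direction: $x_*=\cos(2\pi/(n+1))$ is a fixed point of $T_n$ but not of $-T_n$, so for $-T_n$ one must work with a different periodic point (e.g.\ one of period $2$).
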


Rational functions with periodic Poincar\'e functions
described in Lemma~\ref{ritt} will be called {\em exceptional}.

\subsection{The linearizing map restricted to certain lines}

A simple curve $\gamma:(0,1)\to\bar \C$ passing through a repelling periodic
periodic point $p$ of period $N$
is called an {\em unstable manifold} for $p$
if there exists a subarc $\gamma_*$
of $\gamma$ containing $p$
so that  $f^{N}$ maps $\gamma_*$
diffeomorphically onto $\gamma$.
Similarly, we say that $\gamma$ is
an {\em invariant curve for} 
$p_{-m}\in f^{-m}(p)$ if $\gamma$ is contained
in an unstable manifold for $p$, 
$p_{-m}\in \gamma$ and $f^m(V_m\cap \gamma)
\subset \gamma$
for some neighborhood $V_m$ of $p_{-m}$.

Choose $Q\in \Psi^{-1}(p)\backslash\{0\}$. 
By Lemma~\ref{lem:univ}, $D\Psi(Q)\neq 0$, so there exist
small topological discs $\O_0\ni 0$ and $\O_1\ni Q$ so that
$\Psi|\O_0$ and $\Psi|\O_1$ are univalent
and $\Psi(\O_0)=\Psi(\O_1)$. 
Hence there exists a biholomorphic
map $\T\colon \O_0\to \O_1$ for which
$$\T(0)=Q\mbox{ and } \Psi\circ \T=\Psi \mbox{ restricted to }\O_0.$$  
For convenience we may choose $\O_i$
so that $\lambda^{-1}\O_0\subset \O_0$,
for example we can take a round disc for
$\O_0$.

First we show that the linearizing map $\Psi$ is special
on certain lines. To prove this, we shall use the following

\begin{lemma}\label{lem:periodic}
For each $Q\in \Psi^{-1}(p)\backslash\{0\}$, 
there exists a sequence $z_n\to 0$ so that $\lambda^nz_n\to Q$
and $\Psi(z_n)$ is a repelling periodic point of period $n$.
There exists a neighborhood $V_n\subset \O_0$ of $z_n$ 
so that 
$f^{n}\colon \Psi(V_n)\to \Psi(\O_0)$ is
biholomorphic.
\end{lemma}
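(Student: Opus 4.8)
The plan is to obtain $z_n$ as the solution of the equation $\T(z)=\lambda^n z$ inside $\O_0$: since $\Psi\circ\T=\Psi$ on $\O_0$, such a $z_n$ automatically satisfies $\Psi(\lambda^n z_n)=\Psi(\T(z_n))=\Psi(z_n)$, i.e.\ $f^n$ fixes $\Psi(z_n)$. To solve the equation, fix a round disc $\overline{D(0,r)}\subset\O_0$ on which $\Psi$ is univalent (as arranged above), put $M=\sup_{\overline{D(0,r)}}|\T'|$, and set $h_n(z)=\lambda^{-n}\T(z)$. Since $|\lambda|>1$ and $|\T(z)|\le|Q|+Mr$ on the disc, for all large $n$ the map $h_n$ carries $\overline{D(0,r)}$ into itself with $\sup|h_n'|\le|\lambda|^{-n}M<1$, so it has a unique fixed point $z_n\in D(0,r)$ by the contraction mapping theorem. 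From $z_n=\lambda^{-n}\T(z_n)$ one reads off $|z_n|=O(|\lambda|^{-n})\to0$ and $\lambda^n z_n=\T(z_n)\to\T(0)=Q$; also $z_n\ne0$, since otherwise $Q=\T(0)=0$.

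Next I would verify that $\Psi(z_n)$ is repelling and construct $V_n$. Differentiating $f^n\Psi=\Psi\lambda^n$ at $z_n$ gives $Df^n(\Psi(z_n))=\lambda^n\,D\Psi(\lambda^n z_n)/D\Psi(z_n)$; as $z_n\to0$ and $\lambda^n z_n\to Q$, while $D\Psi(0)\ne0$ and $D\Psi(Q)\ne0$ (the latter by Lemma~\ref{lem:univ}, since $p$ is not an iterate of a critical point), the modulus is $\sim|\lambda|^n\,|D\Psi(Q)/D\Psi(0)|\to\infty$, so $\Psi(z_n)$ is repelling for $n$ large. For the neighbourhood, take $V_n=\lambda^{-n}\O_1$; for large $n$ it shrinks inside $\O_0$ (hence $\Psi$ is univalent there) and contains $z_n$, because $\lambda^n z_n=\T(z_n)\in\T(\O_0)=\O_1$. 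The map $\phi_n=\lambda^{-n}\circ(\Psi|_{\O_1})^{-1}\colon\Psi(\O_0)\to V_n$ is biholomorphic, and $f^n\circ(\Psi\circ\phi_n)=\Psi\circ\lambda^n\circ\phi_n=\Psi\circ(\Psi|_{\O_1})^{-1}=\mathrm{id}$ on $\Psi(\O_0)$, so $f^n\colon\Psi(V_n)\to\Psi(\O_0)$ is biholomorphic with inverse $\Psi\circ\phi_n$.

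The main obstacle is showing the period of $\Psi(z_n)$ is \emph{exactly} $n$, not a proper divisor; this is where one must control how far along the finite orbit $z_n,\lambda z_n,\dots,\lambda^{n-1}z_n$ stays inside $\O_0$, which is exactly what the estimate $|z_n|=O(|\lambda|^{-n})$ quantifies. If $m\mid n$ is the period then $\Psi(\lambda^m z_n)=\Psi(z_n)$; were $\lambda^m z_n\in\O_0$, univalence of $\Psi$ on $\O_0$ would give $\lambda^m z_n=z_n$ and hence $\lambda^m=1$, which is impossible. Therefore $|\lambda^m z_n|>r$, and combined with $|z_n|\le(|Q|+Mr)|\lambda|^{-n}$ this forces $m>n-K$ for a constant $K$ depending only on $\lambda,r,Q$. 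But $m\mid n$ with $m<n$ would give $m\le n/2$, so $m=n$ as soon as $n>2K$. Thus for all sufficiently large $n$, $\Psi(z_n)$ is a repelling periodic point of period exactly $n$, and this family $\{z_n\}$ is the sequence claimed. Apart from this last estimate, the argument is a routine use of the linearizing functional equation together with the contraction mapping theorem.
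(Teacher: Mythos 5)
Your argument is correct and is essentially the paper's argument, reformulated on the $z$-side of $\Psi$. The paper produces $z_n$ by noting that $f^n\colon\Psi(\lambda^{-n}\O_1)\to\Psi(\O_0)$ is biholomorphic with $\overline{\Psi(\lambda^{-n}\O_1)}$ relatively compact in $\Psi(\O_0)$, so its inverse is a (hyperbolic) contraction of $\Psi(\O_0)$ into itself and hence has a unique fixed point $\Psi(z_n)$. You instead solve $\T(z)=\lambda^n z$ by contracting $h_n=\lambda^{-n}\T$ on a disc in $\O_0$; since $\Psi\circ h_n=(f^n|_{\Psi(V_n)})^{-1}\circ\Psi$, the two contractions are intertwined by $\Psi$ and yield the same $z_n$, and your limit $\lambda^n z_n=\T(z_n)\to Q$ is a little cleaner than what one reads off the paper's construction. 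Two small remarks. First, your verification that $\Psi(z_n)$ is actually \emph{repelling} (via $Df^n(\Psi(z_n))=\lambda^n D\Psi(\lambda^n z_n)/D\Psi(z_n)$ and Lemma~\ref{lem:univ}) makes explicit a point the paper leaves implicit. Second, the paragraph you single out as the ``main obstacle'' --- showing the exact period is $n$ and not a proper divisor --- is not something the paper proves, nor is it needed downstream: in the proof of Lemma~\ref{lem:identify} what is used is only that $Df^n(x_n)\in\R$, and this holds for any repelling fixed point of $f^n$ (its minimal period $m$ divides $n$ and $Df^n(x_n)=(Df^m(x_n))^{n/m}$), so the lemma's phrase ``period $n$'' should be read as ``fixed point of $f^n$''. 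Your estimate proving $m=n$ for large $n$ is correct, but you could drop it without loss.
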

\begin{proof}
Take $n$ so large that the closure of $\lambda^{-n}\O_1$ is contained in
$\O_0$. 
Notice that
$$f^{n}|\Psi(\lambda^{-n}\O_1) = (\Psi|\O_1) \circ \lambda^n \circ (\Psi|\lambda^{-n}\O_1)^{-1},$$
and therefore $f^{n}|\Psi(\lambda^{-n}\O_1)$ is
univalent on
$V_n:=\lambda^{-n}\O_1$. Moreover, 
 $$ f^{n} (\Psi(\lambda^{-n}\O_1))=\Psi(\O_1)=\Psi(\O_0),$$
 and thus 
there exists $z_n\in V_n$ 
so that $\Psi(z_n)$ is a fixed point of $f^{n}$.
\end{proof}

\begin{lemma} \label{lem:identify}
Suppose that $f$ is not an exceptional function from Lemma~2.
Let $p$, $Q$ and $\T\colon \O_0\to \O_1$ be as above
and let $L$ be the line through $0$ and $Q$.
Let $\gamma$ be the arc $\Psi(L\cap \O_0)$. 
Then
\begin{enumerate}
\item $\gamma$ is an unstable manifold for the fixed
point $p$;
\item there exists a sequence $z_n\in L$ so that $\Psi(z_n)$ is a periodic point
of period $n$  and $\gamma$
is an unstable manifold for $\Psi(z_n)$;
\item for each $n$ large enough,
$\gamma$ is an invariant curve for 
$\Psi(\lambda^{-n}Q)$
 (which is in the backward orbit of $p$); 
\item $\Psi(\O_0\cap L)=\Psi(\O_1\cap L)$ and so
$\T$ maps $\O_0\cap L$ diffeomorphically onto $\O_1\cap L$;
\item the set
$\{z\in \O_0 : \, D\T(z)\in \R\}$ is a finite union
of real analytic curves,
one of which is $\O_0\cap L$. 
\end{enumerate}
\end{lemma}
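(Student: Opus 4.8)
The plan is to exploit the functional equation $\Psi\circ\T = \Psi$ on $\O_0$ together with the linearization relation $\Psi(\lambda z) = f(\Psi(z))$, pushing everything back to the dynamical plane. First I would prove (1): the subarc $\gamma_* := \Psi(L\cap\lambda^{-1}\O_0)$ satisfies $f(\gamma_*) = \Psi(\lambda(L\cap\lambda^{-1}\O_0)) = \Psi(L\cap\O_0) = \gamma$, and since $L$ is a line through $0$ it is invariant under multiplication by the \emph{real} number $\lambda$, so $\lambda(L\cap\lambda^{-1}\O_0)\subset L$; as $\Psi$ is univalent near $0$ this exhibits $\gamma$ as an unstable manifold for $p$. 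For (2) I would feed the points $z_n$ produced by Lemma~\ref{lem:periodic} through the same argument: after adjusting $Q$ (equivalently rescaling) one arranges $z_n\in L$, and on the neighbourhood $V_n = \lambda^{-n}\O_1$ the map $f^n$ is univalent and sends $\Psi(V_n\cap L)$ onto $\Psi(\O_1\cap L)$; once (4) is known this equals $\gamma$ near $\Psi(z_n)$, so $\gamma$ is an unstable manifold for the periodic point $\Psi(z_n)$. Statement (3) is the dual: $\lambda^{-n}Q\in L$ for all $n$, and $f^n$ maps a neighbourhood of $\Psi(\lambda^{-n}Q)$ in $\gamma$ into $\gamma$ because $\lambda^n(\lambda^{-n}Q) = Q$ lands back in $\O_0$ along $L$; this is precisely the definition of an invariant curve for the backward-orbit point $\Psi(\lambda^{-n}Q)$.

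The heart of the lemma is (4), from which (5) follows quickly. I would argue by analytic continuation. Set $g := \T$, a biholomorphism $\O_0\to\O_1$ with $g(0)=Q$ and $\Psi\circ g = \Psi$. Differentiating the linearization equation and using Lemma~\ref{lem:univ} ($D\Psi\ne 0$ at every preimage of $p$), one checks that $g$ intertwines the linear map $\lambda$ with itself in the sense that, where defined, $g(\lambda z) = \lambda g(z)$: indeed both sides are preimages under $\Psi$ of $f(\Psi(z))$ lying in the appropriate small disc, and they agree to first order at $0$ (both have derivative $D\Psi(0)/D\Psi(Q)$ there after one checks the chain rule), hence agree. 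Because $\lambda\in\R$, the ray structure is preserved: $g$ maps $L\cap\O_0$ into the line through $0$ and $Q$, which is $L$ itself. Combined with $g(\O_0) = \O_1$ this gives $g(\O_0\cap L) = \O_1\cap L$, hence $\Psi(\O_0\cap L) = \Psi(g(\O_0\cap L)) = \Psi(\O_1\cap L)$, which is (4).

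For (5), consider $h(z) := D\T(z)$, a holomorphic non-vanishing function on $\O_0$. From $g(\lambda z) = \lambda g(z)$ we get $h(\lambda z) = h(z)$, so $h$ is constant along each orbit $\{\lambda^n z\}$ accumulating at $0$; by continuity $h$ is \emph{constant}, equal to $D\T(0) = D\Psi(0)/D\Psi(Q)$. Hmm — if $h$ is constant then $\{z : D\T(z)\in\R\}$ is either empty or all of $\O_0$, which contradicts the claimed finite union of curves; so the intended reading must be that $\T$ is only \emph{locally} defined by $\Psi\circ\T=\Psi$ and the global multivalued analytic continuation of $\T$ picks up the full fibre $\Psi^{-1}(\Psi(z))$, so that ``$D\T$'' really denotes the multivalued function $z\mapsto \{D\Psi(\sigma)/D\Psi(z) : \sigma\in\Psi^{-1}(\Psi(z))\}$, or equivalently one studies the analytic set $\{(z,w) : \Psi(w)=\Psi(z)\}$. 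On that reading, $\{z : D\T(z)\in\R\}$ is the real locus of a finite collection of analytic functions (finite because $\Psi$ has finite local degree and the relevant fibres are generically finite over $\O_0$), hence a finite union of real-analytic curves; and $\O_0\cap L$ is one component because along $L$ the relation $g(\lambda z)=\lambda g(z)$ with $\lambda\in\R$ forces $D\T$ real there. The main obstacle I anticipate is making this last point precise — identifying exactly which branches of the fibre-correspondence are being differentiated, and ruling out that the real locus degenerates to isolated points or an open set — which requires using that $f$ is not exceptional (Lemma~\ref{ritt}) so that $\Psi$ is not periodic and the branch structure is genuinely nontrivial but still finite.
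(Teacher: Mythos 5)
Your proof of parts (1) and (3) is essentially correct and matches the paper, but the argument for the core parts (4) and (5) — and hence also (2) — breaks down at the very first step.

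The claimed relation $\T(\lambda z)=\lambda\T(z)$ is false. Test it at $z=0$: the left side is $\T(0)=Q$ and the right side is $\lambda Q$, and these differ since $\lambda\neq 1$. The two functions are indeed both $\Psi$-preimages of $f(\Psi(z))$, but $\Psi$ has an infinite fibre over a generic point and there is no reason for the two branches you picked to coincide; in fact they provably do not. This false conjugacy is what led you to the (correctly flagged) absurdity that $D\T$ would be constant, and the escape route you then propose — reinterpreting $\T$ as a multivalued correspondence — abandons the object the lemma is actually about: $\T$ is a genuine single-valued biholomorphism $\O_0\to\O_1$ fixed by the normalization $\T(0)=Q$, and statement (5) asserts that the real-analytic set $\{z\in\O_0:D\T(z)\in\R\}$ has $L\cap\O_0$ as one of its components.

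What is missing is any use of the hypothesis that repelling cycles have \emph{real multipliers}; your argument never invokes it, which is a decisive warning sign, since the lemma is plainly false without it. The paper's route is: for the periodic points $z_n$ from Lemma~\ref{lem:periodic}, the identity $\Psi(\lambda^n z_n)=f^n(\Psi(z_n))=\Psi(z_n)$ forces $\T z_n=\lambda^n z_n$, and differentiating the linearization relation gives $D\Psi(\lambda^n z_n)\lambda^n=Df^n(x_n)\,D\Psi(z_n)$; since $Df^n(x_n)$ is real by hypothesis and $\lambda$ is real, one gets $D\T(z_n)=D\Psi(z_n)/D\Psi(\T z_n)\in\R$. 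If $D\T$ were constant (hence real) this would make $\T$ affine and $\Psi$ periodic, contradicting non-exceptionality. Otherwise the real-analytic set $X=\{D\T\in\R\}$ is a finite union of curves, and an order-of-contact argument at $0$ (using $\arg z_n=\arg\T z_n$ because $\T z_n=\lambda^n z_n$ with $\lambda^n\in\R$) shows the component of $X$ through infinitely many $z_n$ must be $L\cap\O_0$. That argument simultaneously delivers (5), (4), and — crucially — the fact that $z_n\in L$, which you asserted could be ``arranged by adjusting $Q$'' but which is actually a theorem, not a normalization.

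In short: your (1) and (3) are fine; your (2), (4), (5) rest on a false functional equation and omit the real-multiplier hypothesis, which is the whole content of the lemma.
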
 
\begin{proof} 
First we prove that
\begin{equation}
\label{r}
D\T(z_n)\in\R,
\end{equation}
for the points $z_n$ from Lemma 3.
Let 
$x_n=\Psi(z_n)$ be the corresponding periodic points
of period $n$.
Then  $\Psi \lambda^n=f^{n}\Psi$ implies
$$
D\Psi(\lambda^n z_n) \lambda^n = Df^{n}(x_n) D\Psi(z_n).
$$
Since $Df^{n}(x_n)$ and $\lambda$ are real,
 it follows that $D\Psi(\lambda^n z_n)/D\Psi(z_n)\in \R$.
We note that
$\Psi(\lambda^{n}z_n)=f^{n}(\Psi(z_n))
=f^{n}(x_n)=x_n=\Psi(z_n)$
and $z_n\to 0$ and $\lambda^{n}z_n\to Q$ and therefore
\begin{equation}
\label{23}
\T z_n=\lambda^{n}z_n.
\end{equation}
Hence
$D\Psi(\T z_n)/D\Psi(z_n)\in \R$. This implies (\ref{r}).

If $D\T$ is constant on $\O_0$ then $\T$ is
an affine map,
$\T(z)=az+Q$ where $a=D\T(0)\in\R\backslash\{0\}$. 
The identity $\Psi\circ\T=\Psi$ with a non-constant
meromorphic function $\Psi$ implies that $a=\pm 1$
and we conclude that $\Psi$ is periodic. Then $f$
is an exceptional function from Lemma~2,
contrary to our assumption.

From now on we assume that $D\T$ is not constant.
Then the set $X=\{ z\in \O_0: D\T\in\R\}$
is a finite union  of real analytic
curves. 
We are going to prove that $L\cap\O_0$ is one of
these curves.

Without loss of generality we may 
assume that $L$ is the real line. 

Let $\beta$ be a curve in $X$
that contains infinitely many points $z_n$.
As $\lambda^nz_n\to Q$, we conclude $\arg z_n\to 0$,
so $\beta$ is tangent
to $L$ at $0$.  Let $K>0$ be the order of contact of $\beta$
and $L$ at $0$. Since $D\T(0)\in\R\backslash\{0\}$,
the curves $\T\beta$ and $L$ have the same order
of contact at $Q$. Since $z_n\in\beta$ is of the form
$t_n+i(\tau t_n^K+o(t_n^K))$, whereas $\T z_n\in\T\beta$
is of the form $Q+at_n+o(t_n)+i(a\tau t_n^K+o(t_n^K))$,
where $\tau\neq 0$, $t_n\in\R,$ $a=D\T(0)\in\R\backslash\{0\}$
and $Q\in\R\backslash\{0\}$ (remember that we 
assumed $L=\R$).
We have $\arg z_n=\arg\T z_n$ in view of (\ref{23}), so
we obtain for large $n$:
$$(a\tau t_n^K+o(t_n^K))/(Q+at_n+o(t_n))=
(\tau t_n^K+o(t_n^K))/t_n$$
Since $t_n\to 0$,
this is only possible if $\tau=0$.
It follows that $z_n\in L$ for all large $n$. 

As $\beta$ and $L$ intersect at infinitely
many points $z_n$,  we conclude that $\beta=L\cap\O_0$, and this
proves property 5 of the lemma.

Now $\T:\O_0\to\O_1$ is biholomorphic,
$\T(0)=Q$ and $D\T(z)$ is real for real $z$.
This implies property 4.

We put $\gamma=\Psi(L\cap\O_0)$.
Then property 1 is evident:
take $\gamma_*=\Psi(\lambda^{-1}(L\cap\O_0)).$
Property 2 follows from 
$\lambda^n(V_n\cap L)=\O_1\cap L$ (notation from Lemma~3),
and the fact that $f^n:\Psi(V_n)\to\Psi(\O_0)$
is biholomorphic. 
This also implies property~3 because
$\lambda^{-n}Q\in V_n$.
\end{proof}

\subsection{The case that $\Psi^{-1}(J(f))$ is not contained in a line}
We will use following notation:
if $\gamma$ is a curve through $x$ then $T_x\gamma$ will denote
its tangent line at $x$. 

\begin{lemma}\label{lem:lates}
Assume that $\Psi^{-1}(J(f))\not\subset L$.
Then $\Psi$ is a periodic function, and $f$
is a Latt\`es map.
\end{lemma}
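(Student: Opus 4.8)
The plan is to exploit the symmetry map $\T:\O_0\to\O_1$ and the rich supply of unstable manifolds produced in Lemma~\ref{lem:identify} to force $\Psi$ to be periodic, and then invoke Ritt's Lemma~\ref{ritt} to conclude that $f$ is exceptional — at which point the hypothesis $\Psi^{-1}(J(f))\not\subset L$ rules out all the exceptional cases except Latt\`es. First I would observe that $\Psi^{-1}(J(f))$ is a closed, completely invariant (under multiplication by $\lambda$), nonempty set, and that $\Psi$ maps it onto $J(f)$. If it is not contained in the line $L=\R$, then there is a point $w\in\Psi^{-1}(J(f))$ with $\arg w\notin\{0,\pi\}$. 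The strategy is to produce, for this $w$, a second symmetry of $\Psi$ — a biholomorphism $\widetilde{\T}$ near $0$ sending a neighborhood of $0$ to a neighborhood of some preimage of $p$, with $\Psi\circ\widetilde{\T}=\Psi$ — whose associated real-analytic locus $\{D\widetilde{\T}\in\R\}$ contains a curve \emph{not} tangent to $L$. Combining the two symmetries, one gets an affine relation, hence two $\R$-linearly independent "real directions" for $D\Psi$ near $0$, which forces $D\T$ (or the composed symmetry) to be constant, contradicting the non-exceptional case or directly yielding periodicity of $\Psi$.

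Concretely, the key steps, in order, would be: (1) Recall that $J(f)$ is the closure of the repelling periodic points, and that by Lemma~\ref{lem:periodic} every $Q'\in\Psi^{-1}(p)\setminus\{0\}$ is a limit of points $z_n\to 0$ with $\lambda^n z_n\to Q'$ and $\Psi(z_n)$ repelling periodic; conversely, since $\Psi$ is onto and locally a covering, preimages of repelling periodic points are dense in suitable regions, so one can find preimages of periodic points approaching $0$ along \emph{many} directions once $\Psi^{-1}(J(f))\not\subset L$. (2) For each such periodic point $x=\Psi(z)$, the computation $D\Psi(\lambda^n z)\lambda^n = Df^n(x)\,D\Psi(z)$ from the proof of Lemma~\ref{lem:identify} shows $D\Psi(\lambda^n z)/D\Psi(z)\in\R$ whenever $\Psi(z)$ is periodic of period $n$; this means the set $\{z : D\Psi(\lambda^{n(z)} z)/D\Psi(z)\in\R\}$ accumulates at $0$ along directions other than that of $L$. (3) Use this to build, exactly as $\T$ was built from $Q$, a new symmetry $\widetilde{\T}$ from a preimage $\widetilde{Q}$ of $p$ with $\arg\widetilde{Q}\neq\arg Q$, and run the same order-of-contact argument as in Lemma~\ref{lem:identify} to show the real-analytic curve through $0$ in $\{D\widetilde{\T}\in\R\}$ is the line $\widetilde{L}$ through $0$ and $\widetilde{Q}$, with $\widetilde{L}\neq L$. (4) Now $\T$ maps $L\cap\O_0$ to $L\cap\O_1$ with real derivative, and $\widetilde{\T}$ does the analogous thing for $\widetilde{L}$; composing appropriate iterates of the two, one obtains a biholomorphic germ fixing $0$ whose derivative is real on two distinct lines through $0$, which forces the germ to be affine with real linear part, and then $\Psi\circ(\text{affine})=\Psi$ forces periodicity of $\Psi$. (5) By Lemma~\ref{ritt}, $f$ is then Latt\`es, $\pm T_n$, or $z^{\pm d}$; but for $\pm T_n$ and $z^{\pm d}$ the Julia set is an interval or a circle and $\Psi^{-1}(J(f))$ is exactly a line (respectively $\R$ and a line through $0$ under the exponential-type linearizer), contradicting the hypothesis. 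Hence $f$ is Latt\`es.

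The main obstacle I expect is step~(1)–(3): producing a genuinely \emph{new} direction. One must be careful that the periodic points whose $\Psi$-preimages approach $0$ off the line $L$ actually give rise to a symmetry $\widetilde{\T}$ of the same type — i.e. that their images under $\Psi$ really are preimages of $p$, or at least that one can pass to such. The clean way is probably: since $\Psi^{-1}(J(f))\not\subset L$ and $\Psi^{-1}(p)$ is dense in $\Psi^{-1}(J(f))$ (as $p$ is a repelling fixed point, its backward orbit is dense in $J(f)$, and $\Psi$ is a local homeomorphism off a discrete set), there exists $\widetilde{Q}\in\Psi^{-1}(p)$ with $\widetilde Q\notin L$; apply Lemma~\ref{lem:identify} verbatim with $Q$ replaced by $\widetilde Q$ to get an unstable manifold $\widetilde\gamma=\Psi(\widetilde L\cap\widetilde\O_0)$ and the finite real-analytic locus for $\widetilde\T$ containing $\widetilde L$. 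Then the two unstable manifolds $\gamma,\widetilde\gamma$ through $p$ have distinct tangent lines $T_p\gamma\neq T_p\widetilde\gamma$, and the real-derivative condition along each, together with the chain rule identities, pins down $\Psi$ up to the periodic case. Verifying that the exceptional non-Latt\`es maps are excluded (step 5) is routine once the normal forms of their Poincar\'e functions are recalled.
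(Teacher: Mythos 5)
Your high-level framing is right: use the abundance of preimages of $p$ off $L$ to show the ``real-derivative'' locus associated with a symmetry is too large, force that symmetry to be affine, conclude $\Psi$ is periodic, then invoke Lemma~\ref{ritt} and rule out the non-Latt\`es exceptional maps. Your step (5) is also correct. But step (4) has a genuine gap that the rest of the plan does not repair, and it is exactly the place where the paper's proof does its hardest work.

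The core problem is that you build a \emph{new} symmetry $\widetilde\T$ for each new direction and record only that $D\widetilde\T\in\R$ along its \emph{own} line $\widetilde L$. That gives you two different maps each real along one line. Your proposed remedy, composing ``appropriate iterates of the two'' to produce a biholomorphic germ $g$ fixing $0$ with $Dg$ real on two distinct lines and concluding $g$ is affine, fails on two counts. First, $Dg$ real on two lines through $0$ does not force $g$ affine: $g(z)=z+z^3$ has $Dg(z)=1+3z^2$, whose imaginary part $6xy$ vanishes precisely on the two coordinate axes. (More generally, if $Dg-Dg(0)$ has a zero of order $k$ at $0$, the locus $\{Dg\in\R\}$ is $2k$ arcs through $0$, so two lines only forces $k\ge 2$, not $k=\infty$.) Second, and more structurally, any local deck transformation of $\Psi$ fixing $0$ (i.e.\ a germ $g$ with $g(0)=0$ and $\Psi\circ g=\Psi$) is forced to be the identity, because $D\Psi(0)\ne 0$ makes $\Psi$ locally injective at $0$; so a nontrivial composition of that sort simply cannot exist near $0$, and the plan collapses before the ``affine'' step is even reached.

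What the paper actually does is keep a \emph{single} symmetry $\T$ (built from the original $Q$) and prove that $D\T\in\R$ along \emph{infinitely many distinct lines} $L'$ through $0$. Since, if $D\T$ were non-constant, $\{D\T\in\R\}$ would be a finite union of real-analytic curves (part~5 of Lemma~\ref{lem:identify}), this forces $D\T$ constant, hence $\T$ affine, hence $\Psi$ periodic. Establishing $D\T(z)\in\R$ for $z\in L'$ is the delicate step you skip: it is done by taking a periodic point $x=\Psi(z_k)$ with $z_k\in L'$, introducing the \emph{second} linearizer $\hat\Psi$ of $f^k$ at $x$, showing the relevant $\hat\Psi$-preimage of the unstable manifold $\gamma'$ is a line $\hat L'$ (because it is invariant under multiplication by the real number $\mu=Df^k(x)$), and then carrying out a $C^1$-comparison of tangent lines at $z'=\T(z)$ (equation~(\ref{eq:deckcj})) to conclude $T_z L' = T_{z'}(\T L')$, i.e.\ $D\T(z)\in\R$. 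Without this second-linearizer argument (or a substitute of comparable strength), the claim that the locus $\{D\T\in\R\}$ is rich enough to force $D\T$ constant is unsupported, so the proposal does not go through.
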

\begin{proof}
Throughout the proof, $Q,$ $\Psi$ and $\T\colon \O_0\to\O_1$
will
be as defined before
Lemma~\ref{lem:periodic} (with $\O_1$ a neighborhood
of $Q$).

Since $\Psi^{-1}(J(f))\not\subset L$, there exists $Q^1\in \C\setminus L$ 
so that $\Psi(Q^1)$ is in the backward orbit of $p$,
say $f^{m}(\Psi(Q^1))=p$.
Define  $Q'=\lambda^mQ^1$, then 
$$\Psi(Q')=\Psi(\lambda^mQ^1)=f^m\Psi(Q^1)=p,$$
and thus Lemma~4 applies to the
line $L'$ through $0$ and $Q'$.

As we assume that $\Psi^{-1}(J(f))\notin L$,
there is an infinite set of lines $L'$ as above.
Indeed, it is easy to see that whenever $\Psi^{-1}(J(f))$
is contained in a finite union of lines then it is actually
contained in one line.
We are going to prove that $D\T(z)\in\R$ for
each such line $L'$, thus concluding from part 5 of
Lemma~4 that $f$ is one of the functions
listed in Lemma~2.
For all those functions, except Latt\`es maps,
$\Psi(J(f))$ is a line, so we will conclude that $f$
is a Latt\`es map. 

We denote by $\O^\prime_0$ and $\O^\prime_1$
neighborhoods of $0$ and $Q'$, respectively,
such that $\Psi$ is univalent
in these neighborhoods and
$\Psi(\O^\prime_0)=\Psi(O^\prime_1)$. We choose
a round disc as $\O^\prime_0$.

Applying Lemmas 3 and 4 to the point $Q'$ we obtain
a sequence $z_k\in L'$, $z_k\to 0$ such that
$\lambda^kz_k\to Q'$ and $x_k=\Psi(z_k)$ are repelling
periodic points
of period $k$.
Fix such a point $z=z_k\in\O_0^\prime$, so that
$\lambda^kz\in\O_1^\prime$, and $x=\Psi(z)$ is
the corresponding periodic point (of period $k$),
which does not belong to the forward orbit of a critical
point.

By statements 1, 2 and 4 of Lemma~4, 
$\gamma':=\Psi(L'\cap \O_0')=\Psi(L'\cap \O_1')$ 
is an unstable manifold
for $p$ and also an unstable manifold for $x$.
Since $\Psi|\O_0^\prime$ is univalent, 
the curve $\gamma'$ is smooth and has no self-intersections.
(We chose $\O_0'$ to be a disc,
and so $\gamma'$ is connected.)
Since   $\gamma'$ is an unstable manifold of $x$,
there exists a curve
$\gamma'_*\subset \gamma'$ through $x$ so
that $f^k$ maps $\gamma'_*$ diffeomorphically onto $\gamma'$.
That is, there exists a nested
sequence of curves
$\gamma'_{i,*}\supset \gamma_{i+1,*}'\ni p$
shrinking in diameter to $0$ (with $\gamma_{0,*}'=\gamma'$)
so that $f^k$ maps $\gamma_{i+1,*}'$
diffeomorphically onto $\gamma_{i,*}'$.


Now also consider the linearization $\hat\Psi$
of $f^k$ associated
to the periodic point $x$, i.e. 
\begin{equation}
\label{k}
f^k\hat \Psi=\hat \Psi \mu\quad\mbox{where}
\quad \mu=Df^k(x).
\end{equation}
Let $\gamma_{i,*}$ be the arcs defined a few
lines above, and 
take $i$ so large that there exists a curve
$\hat L_{i}'$ containing $0$
which is mapped by $\hat \Psi$ diffeomorphically
onto $\gamma_{i,*}$. 
Note that $f^{ik}\colon \gamma_{i,*}\to \gamma'$
can be written as 
$\hat \Psi\circ \mu^i \circ (\hat \Psi|\hat L^\prime_{i})^{-1}$
and, since this map is a diffeomorphism onto,
it follows that $\hat \Psi$ is also a
diffeomorphism 
restricted to the curve
$\hat L':=\mu^{i}\hat L^\prime_{i}$, and that
$\hat \Psi(\hat L')=\gamma'$.
In particular there exists $\hat w\in \hat L'$ so that 
$\hat \Psi(\hat w)=p$.

Since $\gamma'$ is an unstable manifold for $x$, 
the curve $\hat L'$ is invariant under $z\mapsto \mu z$.
As the only smooth curve through $0$
which is invariant under real multiplication is a line,
$\hat L'$ must be contained in a line $\hat M$ through $0$. 

Let  $z'=\T(z)\in \O_1$.  For $j\ge 0$ large, 
$w_j:=\lambda^{-jk}(z')$ is contained in $\O_0$. 
Note that $\Psi(w_j)$ tends to
$p=\hat \Psi(\hat w)$ 
as $j\to\infty$. Since
$\hat\Psi$ is a diffeomorphism restricted to $\hat L'$,
and $\hat w\in\hat L'$, 
then for $j$ large enough there exist unique
$\hat w_j$ such that
$\hat\Psi(\hat w_j)=\Psi(w_j).$

Note that 
$$\hat\Psi(\mu^j\hat w_j)=f^{jk}\hat\Psi(\hat w_j)=
f^{jk}\Psi(w_j)=\Psi(\lambda^{jk} w_j)=
\Psi(z')=\Psi(z)=x.$$  
Let $\hat M_j'$ be the line through $0$ and $\mu^j\hat w_j$
and let $\hat M_j\subset \hat M_j'$ be an open line
segment containing the line segment $[\hat w_j,0]$
and contained in a small neighborhood of $[\hat w_j,0]$.
By Lemma 1, there exist neighborhoods $\hat \O_0\ni 0$ and
$\hat \O_1\ni \mu^j\hat w_j$ on each of
which $\hat \Psi$ is biholomorphic,
and $\hat \Psi(\hat \O_0)=\hat \Psi(\hat \O_1)$.
Next  apply 
Lemma~4 to the map $\hat \Psi$
(taking instead of $L,Q$ the line
$\hat M_j'$ and $\mu^j \hat w_j\in \hat \Psi^{-1}(x)$).
This gives that
$\hat \Psi(\hat M_j'\cap \hat \O_0)$ is an invariant manifold
for $x$ and that  $\hat \Psi(\hat M_j'\cap \hat \O_0)=
\hat \Psi(\hat M_j'\cap \hat \O_1)$.
By statements 3 and 4 of Lemma~4,
there exist small neighborhoods $\hat V_j$ of $\hat w_j$,
 $\hat V_j^1$ of $\mu^j\hat w_j$ and $\hat V_j^0$ of $0$
 so that
\begin{equation}
f^{jk}(\hat \Psi(\hat M_j'\cap \hat V_j))=
\hat \Psi(\hat M_j'\cap \hat V_j^1)=\hat \Psi(\hat M_j' \cap \hat V_j^0)\subset
\hat \Psi(\hat M_j).
\label{eq:hatdeckcj'}
\end{equation}
The first equality holds in view of
(\ref{k}) since $\mu$ is real.
Since $\hat w_j$ lies close to $\hat w$
and $\hat \Psi$ is a diffeomorphism restricted to $[\hat w,0]$,
\,\, $\hat \Psi(\hat M_j)$ is a smooth curve which lies
close to $\hat \Psi([\hat w,0])$ (which is
the subarc of $\gamma'$ connecting $p$ and $x$ defined by $\Psi([0,z])$).
It follows that there exists a curve
 $M_j\subset \O_0$ through $w_j$  and $z$ so that $\Psi(M_j)=\hat \Psi(\hat M_j)$.
By (\ref{eq:hatdeckcj'}), there exists a small neighborhood
$V_j$ of $w_j$ so that
$$\Psi(\lambda^{jk}(M_j\cap V_j))=f^{jk}(\Psi(M_j\cap V_j))\subset  \Psi(M_j)=\Psi(\T M_j).$$

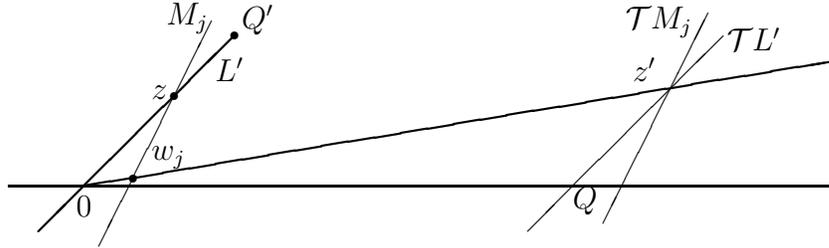
\begin{figure}[h!]
\begin{center}
\unitlength=1mm
\begin{picture}(30,35)(8,-15)
\thicklines
\put(-15,-15){\line(1,0){100}}
\put(-15,-15){\line(-1,0){10}}
\put(-15,-15){\line(6,1){100}}
\put(-15,-15){\line(1,1){20}}
\put(-15,-15){\line(-1,-1){6}}
\put(5,5){\circle*{1}}
\put(-6,-3.5){$z$}
\put(-3,-3){\circle*{1}}
\put(-8.5,-14){\circle*{1}}
\thinlines 
\put(-3,-3){\line(-1,-2){10}}
\put(-3,-3){\line(1,2){5}}
\thinlines
\put(50,-15){\line(1,1){20}}
\put(50,-15){\line(-1,-1){6}}
\put(50,-18){$Q$}
\put(-16,-19){$0$}
\put(2.5,-1){$L'$}
\put(-4,6.5){$M_j$}
\put(57,6){$\T M_j$}
\put(70.5,3){$\T L'$}
\put(6,6){$Q'$}
\put(58,-1){$z'$}
\put(-6,-11.5){$w_j$}
\put(64,0){\line(-1,-2){10}}
\put(64,0){\line(1,2){4}}
\end{picture}
\end{center}
\caption{\label{fig:adjacent_plateaus}
The curves used in the proof
of Lemma~\ref{lem:lates}.
The thinly drawn curves are not necessarily 
line-segments.}
\end{figure}

Since $\lambda^{jk}w_j=z'$, the curves
$\lambda^{jk}M_j$ and $\T M_j$ both go through $z'$ and by the previous
inclusion these curves agree
near $z'$. In particular, 
the tangents of these curves at $z'$ agree:
\begin{equation}
T_{z'}(\lambda^{jk}M_j)=T_{z'}(\T M_j).
\label{eq:deckcj}
\end{equation}
The left hand side of (\ref{eq:deckcj}) is equal to
$T_{w_j}M_j$.
Note that $\hat M_j$ converges to $\hat M$ as $j\to \infty$,
that $\hat \Psi(\hat L')=\gamma'=\Psi(L'\cap \O_0')$ with $\hat L'\subset \hat M$,
and $\Psi(M_j)=\hat \Psi(\hat M_j)$. Hence
$M_j$ converges in the $C^1$ sense to a segment in $L'$.
(Here we use that $\hat\Psi$ is a diffeomorphism on a neighborhood of
$[0,\hat w]$). 

We get therefore that $T_{z'}(\lambda^{jk}M_j)\to T_0L'=T_z L'$
and that the right hand side of (\ref{eq:deckcj}) 
converges to $T_{z'} (\T L')$.
Combined, it follows that
$$T_{z}(L')=T_{z'}(\T L')$$
and so  $D\T(z)\in \R$. Since this holds for a 
whole sequence of points
$z=z_k\in L'$ we obtain that
$D\T(z)\in \R$ for all $z\in L'$. 

As there are infinitely many
such lines $L'$, this implies that $D\T$ is constant, thus
$f$ is a Latt\`es map.
\end{proof}

\subsection{Completion of the proof of Theorem 1}

If $f$ is not a Latt\`es map, Lemma~5 implies that
$\Psi^{-1}(J(f))\subset L$. Without loss of generality
we may assume that that $L$ is the real line.

We recall that the order $\rho$ of a
meromorphic function $\Psi$
is defined by the formula
$$\rho=\limsup_{r\to\infty}\frac{\log T(r,f)}{\log r},$$
where $T(t,f)$ is the Nevanlinna characteristic \cite{Nev}.
According to a theorem of Valiron, \cite[\S 51]{Valiron}
the order of a Poincar\'e function $\Psi$
satisfying
the equation
$$\Psi\lambda=f^N\Psi$$
can be found
by the formula
\begin{equation}\label{88}
\rho=N\log\deg f/\log|\lambda|.
\end{equation}
We claim that under the assumption
that $J\subset \Psi(L)$, 
one can always find infinitely many periodic points $p$
such that the orders of the corresponding
functions $\Psi$ will satisfy $\rho\leq 1+\epsilon$,
for any given $\epsilon>0$.

To prove the claim, we consider the measure of maximal
entropy $\mu$ and the characteristic
exponent
$$\chi(z)=\lim_{n\to\infty}\frac{1}{n}\log|(Df^n)(z)|.$$
The reader may consult the survey \cite{EL}
about these notions. 
According to the multiplicative ergodic theorem,
this limit exists a. e. with respect to $\mu$,
and it is equal a. e. to the average characteristic exponent
\begin{equation}\label{7}
\chi:=\int\log|Df(z)|d\mu(z).
\end{equation}
The average characteristic exponent
is related to the Hausdorff dimension $HD(\mu)$
of measure
$\mu$ by the formula
$$\chi=\frac{\log \deg f}{HD(\mu)},$$
proved in \cite{Le}.
As $\mu$ is supported on the Julia set,
and the Julia set is the image of a line
under a meromorphic function, we conclude that 
$HD(\mu)\leq 1$. So 
\begin{equation}
\label{chi}
\chi\geq\log\deg f.
\end{equation}

Now, $\mu$ is a weak limit of atomic probability
measures $\mu_N$ equidistributed over periodic points
of period $N$. Then (\ref{7}) and (\ref{chi}) imply
that there is
infinitely many periodic points $p$ of periods $N$
such that the multipliers $\lambda$ of these points
satisfies $|\lambda|\geq (1-\epsilon)N(\log\deg f)$. 
We
conclude from Valiron's formula (\ref{88})
that the order
of the Poincar\'e function $\Psi$ is at most $(1-\epsilon)^{-1}$,
as advertised.

We may assume without loss of generality
that $\{0,\infty\}\subset J(f)$ and that $p=\Psi(0)\in\R$.
(This can be achieved by conjugating $f$ by a fractional-linear
transformation).
As we also assume that $L=\R$, the zeros $a_j$ and poles
$b_j$ of $\Psi$ are all real. Taking $\epsilon=1/3$ we obtain
a Poincar\'e function of order at most $3/2$.
According to a theorem of Nevanlinna \cite{GO,H,Nev},
our function $\Psi$ of order less than 2 has a canonical
representation
$$\Psi(z)=be^{az}\frac{\prod_j(1-z/a_j)e^{z/a_j}}{
\prod_j(1-z/b_j)e^{z/b_j}}$$
As $b=\Psi(0)$, $a_j$ and $b_j$ are all real,
we conclude
\begin{equation}
\label{12}
\Psi(z)=e^{icz}g(z),
\end{equation}
where the function $g$ is real on the real line,
and the constant $c=\Im a$ is real. 
If $c=0$ then $\Psi(\R)$ is contained in the
real line and this completes the proof.

Suppose now that $c\neq 0$.
We assume as before
that the point $p$ does not belong to the critical
orbit of $f$. Then $p$ is not a critical value of $\Psi$.
Suppose that $\Psi(z_n)=p$ for $n=1,2,\ldots$, then all $z_k$
are real.
Put $\Psi_n(z)=\Psi(z-z_n)$.
Let $U$ be a small interval around zero on the real line
such that $\Psi$ is univalent in $U$.
Let $\gamma_n=\Psi_n(U)$. These are analytic curves,
and (since the Julia set is perfect)
any two of them have infinitely many intersection
points having an accumulation at  $p=\Psi(0)$.
We conclude that all these $\gamma_n$ are reparametrizations
of the same curve:
$\gamma_n=\gamma$. Now each function $\Psi_n$ maps $U$ onto
the same curve $\gamma$,
and (\ref{12}) implies that
the {\em rate of change of the arguments} of $\Psi_n(x)$
is the same non-zero constant $c$.
We conclude that all $\Psi_n$ are
equal which implies that $\Psi$ is a periodic function.
According to Lemma~2 this can happen only if
$f$ is conjugated to $z^d$ or to a Chebyshev polynomial or to
a Latt\`es map. This proves our Theorem in the case
that $c\neq 0$ and thus completes the proof.

\section{Rational functions with real Julia sets}

{\em Proof of Theorem 2.}
Evidently $f(C)\subset C$.
If there are no critical points on $C$,
then the restriction $f:C\to C$ is a covering.
The degree of this covering should be equal to $\deg f$
since every point of the Julia set has $\deg f$ preimages
in $C$. Thus $C$ is completely invariant and $f^2$
is a Blaschke product.
From now on we assume that $f$ has a critical point
on $C$.

If $J(f)=C$ then both components of the complement
of $C$ are invariant under $f^2$,
so $f^2$ is a Blaschke product in this case as well.

If $J(f)\neq C$, the set of normality is
connected thus there is a fixed point
$z_0$ to which the iterates on the set of
normality converge. As $f(C)\subset C$,
$f$ commutes with reflection
with respect to $C$. This implies that $z_0\in C$.
Evidently, the multiplier of $z_0$ satisfies
$-1\leq\lambda\leq 1$.

We may assume without loss of generality
that $C=\bar\R,$ and $z_0=\infty$. Let $I=[a,b]$ be the convex hull
of the Julia set. This means that $\bar\R\backslash I$ is the immediate
basin of attraction of $\infty$ for the restriction $f\vert_{\bar\R}$.
As the boundary of the immediate basin is invariant,
we obtain

\begin{lemma}\label{A}
The set $\{ a,b\}$ is $f$-invariant.
If $f([a,b])\subset [a,b]$ then $J(f)=[a,b]$.

\end{lemma}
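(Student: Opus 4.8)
The plan is to read off both assertions from two elementary properties of the complementary arc $B_0:=\bar\R\sm I$, namely that it is the \emph{forward invariant} immediate basin of the fixed point $z_0=\infty$ for $f|_{\bar\R}$, and that its closure inside the circle $\bar\R$ is exactly $B_0\cup\{a,b\}$. No genuinely hard step is involved; the content is purely topological.

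First I would record the preliminaries. Since $J(f)$ is closed and $I=[a,b]$ is its convex hull, the endpoints satisfy $a=\inf J(f)$ and $b=\sup J(f)$, so $a,b\in J(f)$ and $\{a,b\}=\partial_{\bar\R}B_0$. The set $B_0$ is forward invariant: the set $A=\{x\in\bar\R:\,f^n(x)\to\infty\}$ is forward invariant, $\infty\in A$, and $B_0$ is the component of $A$ containing $\infty$, so $f(B_0)$ is connected, lies in $A$, and contains $f(\infty)=\infty$, hence $f(B_0)\subseteq B_0$. (This uses nothing about the nature of $z_0$, so it is valid whether $\infty$ is attracting or parabolic with attracting direction along $\bar\R$.) Finally $B_0\cap I=\es$ by definition of $B_0$.

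For the first assertion, fix $x\in\{a,b\}$. On one hand $x\in J(f)$ and $J(f)$ is completely invariant, so $f(x)\in J(f)\subseteq I$. On the other hand $x\in\overline{B_0}$, and continuity together with forward invariance of $B_0$ gives $f(x)\in f(\overline{B_0})\subseteq\overline{f(B_0)}\subseteq\overline{B_0}=B_0\cup\{a,b\}$. Since $B_0\cap I=\es$, these two memberships force $f(x)\in\{a,b\}$; hence $f(\{a,b\})\subseteq\{a,b\}$, i.e.\ $\{a,b\}$ is $f$-invariant. For the second assertion I would argue by contradiction: assuming $f(I)\subseteq I$, the set $I$ is compact, forward invariant, and disjoint from $\infty$, so we may pick a neighbourhood $U$ of $\infty$ in $\bar\C$ with $U\cap I=\es$. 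If some $x\in I$ belonged to the set of normality, then, since the iterates of $f$ on the set of normality converge to $z_0=\infty$, we would have $f^n(x)\in U$ for all large $n$; but $f^n(x)\in I$ for every $n$ by forward invariance, a contradiction. Hence no point of $I$ lies in the set of normality, so $I\subseteq J(f)$, and together with the standing inclusion $J(f)\subseteq I$ this gives $J(f)=I=[a,b]$.

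The only points requiring a little care — and these are the closest thing to an obstacle — are the preliminary facts: that $a,b\in J(f)$, that taking closures is legitimate in $f(\overline{B_0})\subseteq\overline{f(B_0)}\subseteq\overline{B_0}$, and that the whole argument tolerates $z_0$ being merely parabolic rather than attracting. The last is harmless because we never use more than forward invariance of $B_0$ and the convergence of orbits in the set of normality to $z_0$, both of which hold in that case as well.
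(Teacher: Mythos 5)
Your proof is correct and follows essentially the same route as the paper, which compresses the entire argument into the single observation that $\{a,b\}$ is the boundary of the forward-invariant immediate basin $B_0=\bar\R\setminus I$ (hence is itself invariant), with the second assertion then following because forward invariance of $I$ keeps orbits away from $\infty$. You have merely spelled out the two steps the paper leaves implicit.
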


If $f([a,b])\not\subset [a,b]$ then there exists an interval
in $(\alpha,\beta)\subset [a,b]$ which is mapped  by $f$ 
outside $[a,b]$ (and $\alpha,\beta$ are mapped into $a$ or $b$.
Since the preimages of $\alpha,\beta$ are dense in the Julia
set, it follows that in this case the Julia set if a Cantor set.

%

%


\begin{lemma}\label{B}
Each critical points of $f$ in $I$ is contained
in the closure of a real interval which is component 
of the basin of $x_0$.
In particular, each critical point in the Julia set 
is pre-periodic. 
\end{lemma}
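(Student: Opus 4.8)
The plan is to combine two facts: $f$ has real coefficients (it commutes with reflection in $C=\bar\R$), and the set of normality $F=\bar\C\setminus J(f)$ is forward invariant and, being connected (it is the complement in $\bar\C$ of an interval or of a Cantor set, and a basin of a fixed point is a union of components of $F$), coincides with the basin of $x_0$. In particular $F\cap\bar\R=\bar\R\setminus J(f)$ is the basin of $x_0$ for $f|_{\bar\R}$, whose components are the immediate basin $B_0=\bar\R\setminus I$ and the ``gaps'' inside $I$, and $F\supset\bar\C\setminus\R$. Let $c$ be a critical point of $f$ in $I$. If $c\notin J(f)$ then, since $\partial I\subset J(f)$, the point $c$ lies in a gap $G\subset\operatorname{int}(I)$, hence in $\overline G$, the closure of a basin component (this covers in particular the case of a pole, where $f(c)=\infty\in F$), and then $f^n(c)\to x_0$, so some iterate leaves $\operatorname{int}(I)$. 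If $c\in\partial I=\{a,b\}$, then by Lemma~\ref{A} $f(c)\in\{a,b\}\subset\overline{B_0}$, so $c\in\overline{B_0}$, $f(c)\notin\operatorname{int}(I)$, and $c$ is pre-periodic. So the substantial case is $c\in J(f)\cap\operatorname{int}(I)$; there I will show that $v:=f(c)$ is an endpoint of a gap or equals $a$ or $b$, which already places $v$ in the closure of a basin component, and then deduce pre-periodicity.

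For that case, expand $f(z)=v+A(z-c)^{k}\bigl(1+o(1)\bigr)$ near $c$, where $k\ge2$ is the local degree and $A\in\R\setminus\{0\}$; the coefficient $A$ is real because $f$ is real-analytic on $\R$ near $c$, and this reality is the crux. Pick a small half-disc $H=\{\,c+re^{i\theta}:0<r<\varepsilon,\ 0<\theta<\pi\,\}$; then $H\subset\bar\C\setminus\R\subset F$. As $\theta$ runs over $(0,\pi)$ the argument $k\theta$ runs over $(0,k\pi)$, so for $k\ge3$ the image $f(H)$ contains a full punctured disc $D(v,\delta)\setminus\{v\}$, while for $k=2$ it contains $D(v,\delta)$ minus a single ray issuing from $v$ along $\R$. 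Since $F$ is forward invariant, $f(H)\subset F=\bar\C\setminus J(f)$, so $J(f)$ is disjoint from that punctured disc when $k\ge3$, and from $D(v,\delta)$ minus the real ray when $k=2$. In the first case $v$ would be isolated in $J(f)$, contradicting that $J(f)$ is perfect and $v\in J(f)$; hence $k=2$. In the second case one of $(v-\delta,v)$, $(v,v+\delta)$ misses $J(f)$; since $v\in J(f)\subset I=[a,b]$, this forces $v$ to be an endpoint of a gap, or $v=a$, or $v=b$. Either way $v=f(c)$ lies in the closure of a component $W$ of the basin of $x_0$ in $\bar\R$.

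It remains to show $v$, hence $c$, is pre-periodic and that some iterate of $c$ leaves $\operatorname{int}(I)$. If $W=B_0$, i.e.\ $v\in\{a,b\}$, then we are done with $N=1$, since $\{a,b\}$ is $f$-invariant (Lemma~\ref{A}). If $W$ is a gap, pick a nonempty compact $K\subset W$; by normality $f^n(K)\to x_0=\infty$, so $f^n(K)\subset B_0$ for all large $n$. For each $n$, $f^n(W)$ is connected and contained in $\bar\R\setminus J(f)$, so it lies in a single component $W_n$; since $W_n\supset f^n(K)$ meets $B_0$ for large $n$, necessarily $W_n=B_0$ then. Hence $f^n(\overline W)\subset\overline{B_0}=\bar\R\setminus\operatorname{int}(I)$ for some $n$, so $f^{n+1}(c)=f^n(v)\in\overline{B_0}\cap J(f)=\{a,b\}$: thus $c$ is pre-periodic and $f^{n+1}(c)\notin\operatorname{int}(I)$. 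When $f(I)\subset I$ (so $J(f)=I$, Lemma~\ref{A}) there are no gaps and $\bar\R\setminus J(f)=B_0$, so the argument yields $v=f(c)\in\overline{B_0}\cap I=\{a,b\}$ at once, i.e.\ $N=1$, as in Theorem~2.

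The step I expect to be the main obstacle --- indeed the only non-routine one --- is the local normal form: making precise that at a real critical point of local degree $k$ a sufficiently small half-disc maps onto a punctured disc (for $k\ge3$) or onto a punctured disc with exactly one real ray removed (for $k=2$), the deleted ray lying along $\R$. This is precisely where the reality of the leading coefficient $A$ is used, and one must check that the higher-order terms only perturb the image slightly, without affecting these conclusions. Everything else --- the reductions, the appeal to perfectness of $J(f)$, and the gap bookkeeping in the third paragraph --- is elementary once $F$ has been identified with the basin of $x_0$.
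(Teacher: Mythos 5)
Your proof is correct and uses essentially the same idea as the paper: the reality of $f$ on $\R$ forces the Taylor expansion at a real critical point $c$ to have real coefficients, and combined with $J(f)\subset\R$, the perfectness of $J(f)$, and forward invariance of the Fatou set, this forces the critical value $v=f(c)$ to be an endpoint of $J(f)$ --- the paper phrases the local step as ``one of $f(z)=c\pm\varepsilon$ has non-real solutions near the critical point,'' while you phrase it geometrically as ``$f$ maps an upper half-disc onto a punctured disc with one real ray removed.'' Your write-up supplies considerably more detail than the paper's terse proof leaves implicit --- the explicit elimination of local degree $k\ge3$, the identification of the Fatou set with the basin of $x_0$, and the gap-hopping argument that yields pre-periodicity and $f^N(c)\notin\operatorname{int}(I)$ --- but the underlying mechanism is the same.
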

\begin{proof}
Let us call a point $x_0\in J(f)$ an {\em endpoint} of $J(f)$
if $J(f)$ accumulates
to $x_0$ only from one side (left or right). It is clear
that the endpoints of $J(f)$ are boundary points
of the basin of $x_0$.
On the other hand, if $x$ is a critical
point and $c=f(x)$ the corresponding critical value then
one of the equation $f(x)=c+\epsilon$ or $f(z)=c-\epsilon$
has non-real solutions in a neighborhood of $x$
for all sufficiently small $\epsilon$.
Thus the critical value $c\in J$ has to be an endpoint of
$J(f)$.
\end{proof}

So there exists  for each critical points $x\in (a,b)$
of $f$ an integer $N\ge 1$ with $f^N(x)\notin (a,b).$


This proves Theorem 2.

\subsection{Polynomials with real Julia sets}

For polynomials with real Julia sets, a complete
parametric description is possible.
 
Let $f$ be a polynomial of degree $d$ whose Julia set
$J$ is real. We may assume that the convex hull of $J$ is $[0,1]$.
Then all $d$ zeros of $f$ are real (belong to $[0,1]$).
Thus all critical points are also real and belong to
$[0,1]$. Let $c_1,\ldots,c_{d-1}$ be the critical
values enumerated left to right. Then the condition that the equation
$f(z)=1$ has all solutions real implies that
all $c_j$ are outside the interval $(0,1)$. 
Moreover, we obtain for odd
$d$ that $0$ and $1$ are either fixed or make a $2$-cycle.
For even $d$ we have $f(0)=f(1)\in\{0,1\}$.

Now, critical values of such polynomials satisfy
\begin{equation}\label{11}
(-1)^jc_j
\quad\mbox{
is of constant sign.}
\end{equation}
This solves the classification problem completely.
We can prescribe {\em arbitrarily} $d-1$ critical values $c_j\in\R
\backslash(0,1)$
satisfying (\ref{11}). Then there exists a real polynomial
with these critical values (ordered sequence!).
This polynomial is unique up to the change of the
independent variable $z\mapsto az+b$ with positive $a$ and real $b$.
Using this change of the variable we achieve
that the convex hull of the set $\{ z:f(z)\in\{0,1\}\}$ is
$\{0,1\}$.

Thus there is a bijective correspondence between sequences of
critical values $(c_1,\ldots,c_{d-1})$ satisfying
$c_j\in\R\backslash(0,1)$
and (\ref{11}) and polynomials with the property that the convex hull
of the Julia set is $[0,1]$. Chebyshev polynomials correspond
to the case $c_j\in\{0,1\}$. All other polynomials of our class
have Cantor Julia sets.

\subsection{Rational functions of the class (iii) in Theorem 2}

We were unable to give any classification
of these functions, so we only give several examples.


\begin{example}
{\rm The simplest non-polynomial example of case (iii)
is a perturbation of a quadratic polynomial.
Consider  $f(z)= (z^2-4)/(1+cz)$ with $c\in \R$. 
If $|c|<1$, this map has an attractor at $\infty$ with multiplier $c$.
Note that $f'(z)=\dfrac{cz^2+2z+4c}{(1+cz)^2}$
and this has two real zeros when $-1/2<c<1/2$. 
To compute $f^{-1}(\R)$, we note that $f(z)=w$ 
is equivalent to 
$$z=\dfrac{cw\pm \sqrt{c^2w^2+4w+16}}{2}.$$
It follows that when $|c|>1/2$, $c\in \R$, then $f^{-1}(\R)\subset \R$ and so 
$f$ is a Blaschke product, while for $|c|<1/2$, $c\in \R$
we find $f^{-1}(\R)\not\subset \R$ and so $f$ is not a Blaschke product.
Note that $f$ is a  Blaschke product with 
an attracting fixed point at $\infty$ if $c\in \R$ and $1/2<|c|<1$. 

As remarked, for $|c|<1/2$,  $f$ is not a Blaschke product. Let us determine
its Julia set. There exists an interval
$I=[p,q]$ containing $0$ so that  $f(p)=f(q)=q$, 
$f\colon I \to \R$ is continuous and has a minimum at some $c\in \mbox{int}(I)$
with $f(c)<p$. Hence there exists two disjoint intervals
$I_0,I_1$ in $I$ which are mapped diffeomorphically onto $I$
and so $f$ has a full horseshoe $\Lambda$ in $[p,q]$.
Each other point is in the basin of the attractor at $\infty$.
Since $f$ has degree two, this horseshoe is also backward invariant, $f^{-1}(\Lambda)=\Lambda$  and so  it follows that $J(f)=\Lambda\subset \R$.}
\end{example}

\begin{example}
{\rm A function of the type (iii)
can have a neutral rational fixed point.
Indeed, take
$f(z)=\dfrac{(z-2)(z+c)(z-c)}{(z-1)(z+1)}$
with $c\in (0,1)$ close to $1$. Then
$\infty$ is a parabolic fixed point which attracts
real points $x\in (-\infty,-1)$ and repels points with $x\in \R$
and $x$                
 large. (Indeed, $f(x)<x$ for $x\in \R$ and
$|x|$ large because $\dfrac{(x+c)(x-c)}{(x-1)(x+1)}>1$ for
$|x|>1$ and therefore $f(x)<x$  when $x\in (-\infty,-1)$. A similarly argument shows
that $f(x)<x$ when $x\in (1,\infty)$.) 
The map $f$ has a unique minimum $c\in (-1,1)$ with $f(c)<-1$.
There are three  disjoint intervals
$I_1,I_2,I_3$ with $I_1,I_2\subset (-1,1)$ and
$I_3\subset (1,\infty)$ 
such that $f$ maps each of these diffeomorphically onto $(-1,\infty)$.
So the Julia set contains a set $\Lambda\subset (-1,\infty)$ on which 
$f$ acts as subshift of three symbols.
Since $f$ has degree $3$, it follows that each preimage of this interval
again lies inside this interval. Hence $J(f)=\Lambda\subset \R$.
Each point outside $\Lambda$ is in the basin of $\infty$.
Clearly $f$ is not a Blaschke product (there exist critical points on $\R$
so $f^{-1}(\R)$ is not contained in $\R$).}
\end{example}

Our last example shows that in general one cannot take
$N=1$ in Case (iii) of Theorem 2.

\begin{example}

{\rm We begin with a Blaschke product of degree $2$,
$$g(z)=Kz\frac{z-a}{z-p}, \quad 0<p<a<1,$$
where the constants are chosen such that $K>1,$ $f(1)=1$,
and $f'(1)>1$.
This function has two branches defined on subintervals of $[0,1]$
that map each subinterval on the whole $[0,1]$,
so the Julia set is a Cantor set whose convex hull is $[0,1]$.
Fix a closed interval $I\subset(p,a)$ on which $f(x)\leq-1$,
(i.e. $I$ is in the basin of the attractor at infinity), 
and let $c$ be the middle point of this interval.
Let $b$ be the preimage of the point $c$ on the interval $[a,1]$.
Now we make a small perturbation of $g$, so that the resulting
rational function of degree $3$ is very close to $g$ on
$[0,1]$ minus a small neighborhood of the point $b$.
Our function is
$$f(z)=K(\epsilon)g(z)\frac{z-b+\epsilon}{z-b-\epsilon},$$
where $\epsilon$ is a very small positive number, and $K(\epsilon)$
is chosen so that $f(1)=1,$ so that $K(\epsilon)\to 1$
as $\epsilon\to 0$. It is clear that $f$ 
has two critical
values $c_1<c_2$ on $[0,1]$ at the critical points near $b$.

It is also easy to see that these critical values both tend to $c$
as $\epsilon\to 0$. 
(Indeed, fix $\delta>0$ small and let $V=[b-\delta,b+\delta]$
be a small neighborhood  of $b$. Our function 
$f$ converges to $g$ and also $f'$ converges
to $g'$ outside $V$ (as $\epsilon\to 0$). 
In particular $f(b+\delta)$ is close to $c=g(b)$,
and $f$ is {\em increasing} at this point $b+\delta$.
But $f$ also has a pole at $b+\epsilon<b+\delta$,
and it is {\em decreasing} on the right hand side
of this pole. It follows that $f$ has a critical point
(a minimum) on the interval $[b+\epsilon, b+\delta]$ with
critical value at most $g(b+\delta)+\delta$ (when $\epsilon>0$ is close to zero), 
which is close to $c=g(b)$.
There is also another critical point on the other side of $b$,
where the critical value is greater than $g(b-\delta)-\delta$.
As the right critical value is evidently greater than the left one,
both critical values tend to $c=g(b)$.)

So if $\epsilon$ is small enough,
we have $f([c_1,c_2])\subset I$, thus the whole interval
$[c_1,c_2]$ escapes from $[0,1]$ under the second iterate of $f$,
and we conclude that $J(f)\subset [0,1]$,
because each point of $[0,1]\backslash [c_1,c_2]$
has three preimages in $[0,1]\backslash [c_1,c_2]$.}

\end{example}

{\em Department of Mathematics,

Purdue University

West Lafayette, IN 47907

USA

eremenko@math.purdue.edu}
\vspace{.1in}

{\em Maths. Dept., University of Warwick,

Coventry CV4 7AL,

UK

strien@maths.warwick.ac.uk}

\enddocument